\newtheorem{thm}{Theorem}[section]
\newtheorem{lem}[thm]{Lemma}
 \newcommand{\thmref}[1]{Theorem~\ref{#1}}
 \newcommand{\lemref}[1]{Lemma~\ref{#1}}
\newcommand{\R}{{\mathbb R}}
\newcommand{\la}{{\langle}}
\newcommand{\ra}{{\rangle}}
\newcommand{\dl}{{\delta}}
\newcommand{\bee}{\begin{equation*}}
\newcommand{\eee}{\end{equation*}}
\newcommand{\be}{\begin{equation}}
\newcommand{\ee}{\end{equation}}
\newcommand{\pn}{\par\noindent}
\title{Inversion of the Laplace transform from the real axis using an
adaptive iterative method}
\author{Sapto W. Indratno\\
\small Department of Mathematics\\[-0.8ex]
\small Kansas State University, Manhattan, KS 66506-2602, USA\\
\small \texttt{sapto@math.ksu.edu}
\and
A G Ramm\\
\small Department of Mathematics\\[-0.8ex]
\small Kansas State University, Manhattan, KS 66506-2602, USA\\[-0.8ex]
\small \texttt{ramm@math.ksu.edu}\\
}
\begin{document}
\date{}
\maketitle

\begin{abstract}
In this paper a new method for inverting the Laplace transform from
the real axis is formulated. This method is based on a quadrature
formula. We assume that the unknown function $f(t)$ is continuous
with (known) compact support. An adaptive iterative method and an
adaptive stopping rule, which yield the convergence of the
approximate solution to $f(t)$, are proposed in this paper.
\end{abstract}
\pn{\\ {\em MSC:} 15A12; 47A52; 65F05; 65F22  \\
{\em Key words:} Fredholm integral equations of the first kind;
(adaptive)iterative regularization; inversion of the Laplace
transform; discrepancy principle }
\section{Introduction}
Consider the Laplace transform : \be\label{Lf}
\mathcal{L}f(p):=\int_0^\infty e^{-pt}f(t)dt=F(p),\quad
\text{Re}p>0, \ee where $\mathcal{L}:X_{0,b}\to L^2[0,\infty)$,
\be\label{Xb} X_{0,b}:=\{f\in L^2[0,\infty)\ | \ \text{supp}
f\subset [0,b)\},\quad b>0.\ee We assume in \eqref{Xb} that $f$ has
compact support. This is not a restriction practically. Indeed, if
$\lim_{t\to \infty}f(t)=0$, then $|f(t)|<\dl$ for $t>t_\dl$, where
$\dl>0$ is an arbitrary small number. Therefore, one may assume that
supp$f\subset [0,t_\delta]$, and treat the values of $f$ for
$t>t_\delta$ as noise. One may also note that if $f\in
L^1(0,\infty)$, then
$$F(p):=\int_0^\infty f(t)e^{-pt}dt=\int_0^bf(t)e^{-pt}dt+\int_b^\infty f(t)e^{-pt}dt:=F_1(p)
+F_2(p),$$ and $|F_2(p)|\leq e^{-bp}\delta$, where
$\int_b^\infty|f(t)|dt\leq \delta$. Therefore, the contribution of
the "tail" $f_b(t)$ of $f$, $$f_b(t):=\left\{
                                       \begin{array}{ll}
                                         0, & \hbox{$t<b$,} \\
                                         f(t), & \hbox{$t\geq b$,}
                                       \end{array}
                                     \right.$$ can be considered as noise if $b>0$ is large and
$\delta>0$ is small. We assume in \eqref{Xb} that $f\in
L^2[0,\infty)$. One may also assume that $f\in L^1[0,\infty)$, or
that $|f(t)|\leq c_1e^{c_2t}$, where $c_1, c_2 $ are positive
constants. If the last assumption holds, then one may define the
function $g(t):= f(t)e^{-(c_2+1)t}$. Then $g(t)\in L^1[0,\infty)$,
and its Laplace transform $G(p)=F(p+c_2+1)$ is known on the interval
$[c_2+1, c_2+1+b]$ of real axis if the Laplace transform $F(p)$ of
$f(t)$ is known on the interval $[0,b]$. Therefore, our inversion
methods are applicable to these more general classes of functions
$f$ as well.

The operator $\mathcal{L}:X_{0,b}\to L^2[0,\infty)$ is compact.
Therefore, the inversion of the Laplace transform \eqref{Lf} is an
ill-posed problem (see \cite{MRZ84}, \cite{RAMM05}). Since the
problem is ill-posed, a regularization method is needed to obtain a
stable inversion of the Laplace transform. There are many methods to
solve equation \eqref{Lf} stably: variational regularization,
quasisolutions, iterative regularization (see e.g, \cite{SWIAGR09},
\cite{MRZ84}, \cite{RAMM05}, \cite{RAMM499}). In this paper we
propose an adaptive iterative method based on the Dynamical Systems
Method (DSM) developed in \cite{RAMM05}, \cite{RAMM499}. Some
methods have been developed earlier for the inversion of the Laplace
transform (see \cite{KSC76}, \cite{BDBM79}, \cite{HDJA68},
\cite{PI06}). In many papers the data $F(p)$ are assumed exact and
given on the complex axis. In \cite{VVK06} it is shown that the
results of the inversion of the Laplace transform from the complex
axis are more accurate than these of the inversion of the Laplace
transform from the real axis. The reason is the ill-posedness of the
Laplace transform inversion from the real axis. A survey regarding
the methods of the Laplace transform inversion has been given in
\cite{BDBM79}. There are several types of the Laplace inversion
method compared in \cite{BDBM79}. The inversion formula for the
Laplace transform is well known: \be\label{Mellin}
f(t)=\frac{1}{2\pi
i}\int_{\sigma-i\infty}^{\sigma+i\infty}F(p)e^{pt}dp,\quad \sigma>0,
\ee is used in some of these methods, and then $f(t)$ is computed by
some quadrature formulas, and many of these formulas can be found in
\cite{PDPR84} and \cite{VKNS68}. Moreover, the ill-posedness of the
Laplace transform inversion is not discussed in all the methods
compared in \cite{BDBM79}. The approximate $f(t)$, obtained by these
methods when the data are noisy, may differ significantly from
$f(t)$. There are some papers in which the inversion of the Laplace
transform from the real axis was studied (see \cite{AYAGR00},
\cite{LDAM02}, \cite{CWD03}, \cite{FMSS}, \cite{VVK06},
\cite{MCDG07}, \cite{RAMM198}, \cite{VAR83}, \cite{JWEP78}). In
\cite{AYAGR00} and \cite{RAMM198} a method based on the Mellin
transform is developed. In this method the Mellin transform of the
data $F(p)$ is calculated first and then inverted for $f(t)$. In
\cite{LDAM02} a Fourier series method for the inversion of Laplace
transform from the real axis is developed. The drawback of this
method comes from the ill-conditioning of the discretized problem.
It is shown in \cite{LDAM02} that if one uses some basis functions
in $X_{0,b}$, the problem becomes extremely ill-conditioned if the
number $m$ of the basis functions exceeds $20$. In \cite{FMSS} a
reproducing kernel method is used for the inversion of the Laplace
transform. In the numerical experiments in \cite{FMSS} the authors
use double and multiple precision methods to obtain high accuracy
inversion of the Laplace transform. The usage of the multiple
precision increases the computation time significantly which is
observed in \cite{FMSS}, so this method may be not efficient in
practice. A detailed description of the multiple precision technique
can be found in \cite{HF} and \cite{KMH67}. Moreover, the Laplace
transform inversion with perturbed data is not discussed in
\cite{FMSS}. In \cite{JWEP78} the authors develop an inversion
formula, based on the eigenfunction expansion for the Laplace
transform. The difficulties with this method are: a) the inversion
formula is not applicable when the data are noisy, b) even for exact
data the inversion formula is not suitable for numerical
implementation.

The Laplace transform as an operator from $C_{0k}$ into $L^2$, where
$C_{0k}=\{f(t)\in C[0,+\infty)\ | \ \text{supp} f\subset [0,k)\},\
k=const>0, \ L^2:=L^2[0,\infty),$ is considered in \cite{CWD03}. The
finite difference method is used in \cite{CWD03} to discretize the
problem, where the size of the linear algebraic system obtained by
this method is fixed at each iteration, so the computation time
increases if one uses large linear algebraic systems. The method of
choosing the size of the linear algebraic system is not given in
\cite{CWD03}. Moreover, the inversion of the Laplace transform when
the data $F(p)$ is given only on a finite interval $[0,d]$, $d>0$,
is not discussed in \cite{CWD03}.

The novel points in our paper are:
\begin{itemize}
\item[1)] the representation of the approximation solution
\eqref{fmdel} of the function $f(t)$ which depends only on the
kernel of the Laplace transform,
\item[2)]the adaptive iterative
scheme \eqref{it3} and adaptive stopping rule \eqref{dp}, which
generate the regularization parameter, the discrete data
$F_\dl(p)$ and the number of terms in \eqref{fmdel}, needed for
obtaining an approximation of the unknown function $f(t)$.
\end{itemize}
We study the inversion problem using the pair of spaces
$(X_{0,b},L^2[0,d])$, where $X_{0,b}$ is defined in \eqref{Xb},
develop an inversion method, which can be easily implemented
numerically, and demonstrate in the numerical experiments that our
method yields the results comparable in accuracy with the results,
presented in the literature, e.g., with the double precision results
given in paper \cite{FMSS}.

The smoothness of the kernel allows one to use the compound
Simpson's rule in approximating the Laplace transform. Our approach
yields a representation \eqref{fmdel} of the approximate inversion
of the Laplace transform. The number of terms in approximation
\eqref{fmdel} and the regularization parameter are generated
automatically by the proposed adaptive iterative method. Our
iterative method is based on the iterative method proposed in
\cite{SWIAGR092}. The adaptive stopping rule we propose here is
based on the discrepancy-type principle, established in
\cite{RAMM525}. This stopping rule yields convergence of the
approximation \eqref{fmdel} to $f(t)$ when the noise level $\dl\to
0$.

A detailed derivation of our inversion method is given in Section 2.
In Section 3 some results of the numerical experiments are reported.
These results demonstrate the efficiency and stability of the
proposed method.

\section{Description of the method}
Let $f\in X_{0,b}$. Then equation \eqref{Lf} can be written as: \be
(\mathcal{L}f)(p):=\int_0^be^{-pt}f(t)dt=F(p),\quad 0\leq p. \ee Let
us assume that the data $F(p)$, the Laplace transform of $f$, are
known only for $0\leq p\leq d<\infty.$ Consider the mapping
$\mathcal{L}_m: L^2[0,b]\to \R^{m+1}$, where \be\label{amkb}
(\mathcal{L}_m f)_i:=\int_0^be^{-p_it}f(t)dt=F(p_i),\quad
i=0,1,2,\hdots,m, \ee \be\label{pj}p_i:=ih,\quad i=0,1,2,\hdots,m,\
h:=\frac{d}{m},\ee and $m$ is an even number which will be chosen
later. Then the unknown function $f(t)$ can be obtained from a
finite-dimensional operator equation \eqref{amkb}. Let
\be\label{ipWm} \la u,v\ra_{W^{m}}:=\sum_{j=0}^{m}w_j^{(m)}
u_jv_j\quad and \quad \|u\|_{W^{m}}:=\la u,u\ra_{W^m} \ee be the
inner product and norm in $\R^{m+1}$, respectively, where
$w_j^{(m)}$ are the weights of the compound Simpson's rule (see
\cite[p.58]{PDPR84}), i.e., \be\label{wj}w_j^{(m)}:=\left\{
                          \begin{array}{ll}
                            h/3, & \hbox{$j=0,m$;} \\
                            4h/3, & \hbox{$j=2l-1,\ l=1,2,\hdots,m/2$;} \\
                            2h/3, & \hbox{$j=2l,\ l=1,2,\hdots,(m-2)/2$,}
                          \end{array}
                        \right.\quad h=\frac{d}{m},
\ee where $m$ is an even number. Then \be\begin{split} \la
\mathcal{L}_mg,v\ra_{W^m}&=\sum_{j=0}^m
w_j^{(m)}\int_0^be^{-p_jt}g(t)dt v_j\\
&=\int_0^b g(t)\sum_{j=0}^m w_j^{(m)}e^{-p_jt}v_j dt=\la
g,\mathcal{L}_m^*v\ra_{X_{0,b}},
\end{split}\ee where \be\label{amkbs} \mathcal{L}_m^*v=\sum_{j=0}^mw_j^{(m)}
e^{-p_jt}v_j,\quad v:=\left(
                                \begin{array}{c}
                                  v_0 \\
                                  v_1 \\
                                  \vdots \\
                                  v_m \\
                                \end{array}
                              \right)
\in \R^{m+1}. \ee and \be\label{ipX}\la
g,h\ra_{X_{0,b}}:=\int_0^bg(t)h(t)dt.\ee  It follows from
\eqref{amkb} and \eqref{amkbs} that \be\label{Tm}
(\mathcal{L}_m^*\mathcal{L}_mg)(t)=\sum_{j=0}^m
w_j^{(m)}e^{-p_jt}\int_0^b e^{-p_j z}g(z)dz :=(T^{(m)}g)(t),\ee and
\be\label{Qm} \mathcal{L}_m\mathcal{L}_m^*v=\left(
                                                  \begin{array}{c}
                                                    \int_0^b e^{-p_0 t}\sum_{j=0}^mw_j^{(m)}
e^{-p_jt}v_j dt\\
                                                    \int_0^b e^{-p_1 t}\sum_{j=0}^mw_j^{(m)}
e^{-p_jt}v_j dt\\
                                                    \vdots \\
                                                    \int_0^b e^{-p_mt}\sum_{j=0}^mw_j^{(m)}
e^{-p_jt}v_j dt\\
                                                  \end{array}
                                                \right):=Q^{(m)}v,
 \ee
where \be\label{Bm} (Q^{(m)})_{ij}:=w_j^{(m)}\int_0^b
e^{-(p_i+p_j)t}dt=w_j^{(m)}\frac{1-e^{-b(p_i+p_j)}}{p_i+p_j},\quad
i,j=0,1,2,\hdots,m.\ee
\begin{lem}\label{lemswj}
Let $w_j^{(m)}$ be defined in \eqref{wj}. Then \be\label{swj}
\sum_{j=0}^mw_j^{(m)}=d, \ee for any even number $m$.
\end{lem}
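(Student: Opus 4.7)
The statement is an elementary identity about the compound Simpson weights; there are two natural routes, and I would present the direct counting argument since it is self-contained and matches the explicit definition \eqref{wj}. The alternative short route is to observe that the compound Simpson's rule with nodes $p_j=jh$ on $[0,d]$ is exact on constants, so applying it to the function identically equal to $1$ gives $\sum_{j=0}^m w_j^{(m)}\cdot 1=\int_0^d 1\,dp=d$; I would mention this as a sanity check but not rely on it, because the paper has not formally invoked exactness of Simpson's rule at this point.

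For the direct proof, the plan is to partition the index set $\{0,1,\ldots,m\}$ according to the three cases in \eqref{wj} and count each block. There are exactly two endpoint indices $j=0,m$, each contributing weight $h/3$; exactly $m/2$ odd indices $j=2l-1$ with $l=1,\ldots,m/2$, each contributing $4h/3$; and exactly $(m-2)/2$ interior even indices $j=2l$ with $l=1,\ldots,(m-2)/2$, each contributing $2h/3$. (Here one uses that $m$ is even, so these three blocks are disjoint and exhaust $\{0,\ldots,m\}$, and the cardinalities are integers.)

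Summing contributions, I would write
\bee
\sum_{j=0}^m w_j^{(m)}=2\cdot\frac{h}{3}+\frac{m}{2}\cdot\frac{4h}{3}+\frac{m-2}{2}\cdot\frac{2h}{3}
=\frac{h}{3}\bigl(2+2m+(m-2)\bigr)=\frac{h}{3}\cdot 3m=mh,
\eee
and then invoke $h=d/m$ from \eqref{pj} to conclude $\sum_{j=0}^m w_j^{(m)}=d$.

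There is no real obstacle; the only place to be careful is the bookkeeping of how many odd versus interior even indices occur when $m$ is even, and the tacit use of the parity hypothesis to ensure both $m/2$ and $(m-2)/2$ are nonnegative integers (with the interior-even block being empty when $m=2$, in which case the identity reduces to $h/3+4h/3+h/3=2h=d$, still correct).
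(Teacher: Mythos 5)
Your proof is correct and follows essentially the same route as the paper: split the index set into the two endpoints, the $m/2$ odd indices, and the $(m-2)/2$ interior even indices, sum the contributions to get $mh$, and use $h=d/m$. The extra remarks about the $m=2$ edge case and the exactness-on-constants sanity check are fine but not needed.
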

\begin{proof}
From definition \eqref{wj} one gets \be\begin{split}
\sum_{j=0}^mw_j^{(m)}&=w_0^{(m)}+w_m^{(m)}+\sum_{j=1}^{m/2}w_{2j-1}^{(m)}+\sum_{j=1}^{(m-2)/2}w_{2j}^{(m)}\\
&=\frac{2h}{3}+\sum_{j=1}^{m/2}\frac{4h}{3}+\sum_{j=1}^{(m-2)/2}\frac{2h}{3}\\
&=\frac{2h}{3}+\frac{2hm}{3}+\frac{h(m-2)}{3}=hm=\frac{d}{m}m=d.
\end{split}\ee
\lemref{lemswj} is proved.
\end{proof}
\begin{lem}
The matrix $Q^{(m)}$, defined in \eqref{Bm}, is positive
semidefinite and self-adjoint in $\R^{m+1}$ with respect to the
inner product \eqref{ipWm}.
\end{lem}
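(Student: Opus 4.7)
The plan is to reduce both claims directly to the adjoint identity already established in \eqref{Qm}, namely $Q^{(m)}=\mathcal{L}_m\mathcal{L}_m^*$, where $\mathcal{L}_m^*$ is the adjoint of $\mathcal{L}_m$ with respect to the pair of inner products $\langle\cdot,\cdot\rangle_{X_{0,b}}$ on $L^2[0,b]$ and $\langle\cdot,\cdot\rangle_{W^m}$ on $\R^{m+1}$. This is really the whole content: once one uses the adjoint relation, self-adjointness and positive semidefiniteness drop out formally.

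Concretely, for self-adjointness I would, for arbitrary $u,v\in\R^{m+1}$, apply the defining property of $\mathcal{L}_m^*$ twice:
\bee
\la Q^{(m)}u,v\ra_{W^m}=\la\mathcal{L}_m\mathcal{L}_m^*u,v\ra_{W^m}=\la\mathcal{L}_m^*u,\mathcal{L}_m^*v\ra_{X_{0,b}}=\la u,\mathcal{L}_m\mathcal{L}_m^*v\ra_{W^m}=\la u,Q^{(m)}v\ra_{W^m}.
\eee
For positive semidefiniteness I would set $v=u$ in the same chain and observe
\bee
\la Q^{(m)}u,u\ra_{W^m}=\la\mathcal{L}_m^*u,\mathcal{L}_m^*u\ra_{X_{0,b}}=\|\mathcal{L}_m^*u\|_{X_{0,b}}^2\ge 0.
\eee
If a reader wants to see the positivity without invoking the abstract adjoint, I would also note the explicit computation, using \eqref{Bm} and the symmetry of $\int_0^b e^{-(p_i+p_j)t}\,dt$ in $(i,j)$:
\bee
\la Q^{(m)}u,u\ra_{W^m}=\sum_{i,j=0}^m w_i^{(m)}w_j^{(m)}u_iu_j\int_0^b e^{-(p_i+p_j)t}\,dt=\int_0^b\Bigl(\sum_{j=0}^m w_j^{(m)}u_j e^{-p_jt}\Bigr)^{2}dt\ge 0,
\eee
which makes positivity transparent.

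The only point worth being careful about is that the matrix $Q^{(m)}$ is not symmetric in the standard Euclidean sense (the weights $w_j^{(m)}$ enter only through the $j$-index in \eqref{Bm}); self-adjointness is asserted with respect to the weighted inner product $\la\cdot,\cdot\ra_{W^m}$, and this is precisely what is needed to compensate for the asymmetry. I do not foresee any genuine obstacle: both properties follow in two lines from the adjoint identity, and no new estimate on the weights $w_j^{(m)}$ (beyond the fact that they are positive, used only if one wanted the stronger \emph{definite} statement) is required.
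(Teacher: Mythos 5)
Your proof is correct, and it is a legitimate, slightly different packaging of the same underlying idea. The paper's own proof works entirely at the matrix level: it introduces the Gram matrix $H_m$ with entries $\la\phi_i,\phi_j\ra_{X_{0,b}}$, $\phi_i(t)=e^{-p_it}$, and the diagonal weight matrix $D_m$, observes that $\la Q^{(m)}u,v\ra_{W^m}=\la D_mH_mD_mu,v\ra_{\R^{m+1}}$ with $D_mH_mD_m$ Euclidean-symmetric, and gets positivity from $\la H_mD_mu,D_mu\ra_{\R^{m+1}}\ge 0$ because $H_m$ is a Gram matrix. You instead invoke the factorization $Q^{(m)}=\mathcal{L}_m\mathcal{L}_m^*$ and the adjoint relation $\la\mathcal{L}_mg,v\ra_{W^m}=\la g,\mathcal{L}_m^*v\ra_{X_{0,b}}$, both of which are already displayed in the paper, so self-adjointness and $\la Q^{(m)}u,u\ra_{W^m}=\|\mathcal{L}_m^*u\|_{X_{0,b}}^2\ge 0$ follow in two lines. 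The two arguments are morally identical --- the Gram structure of $H_m$ is exactly the adjoint relation in coordinates --- but yours is shorter and makes the conceptual reason transparent, while the paper's explicit $D_mH_mD_m$ computation has the side benefit of exhibiting the Euclidean-symmetric matrix that represents $Q^{(m)}$ in the weighted inner product. Your closing remarks are also accurate: $Q^{(m)}$ is indeed not Euclidean-symmetric because the weights enter only through the column index, and no positivity of the $w_j^{(m)}$ is needed for the semidefinite statement, since the square $\int_0^b\bigl(\sum_j w_j^{(m)}u_je^{-p_jt}\bigr)^2dt$ is nonnegative regardless of the signs of the weights.
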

\begin{proof}
Let \be (H_m)_{ij}:=\int_0^b
e^{-(p_i+p_j)t}dt=\frac{1-e^{-b(p_i+p_j)}}{p_i+p_j}, \ee and \be
(D_m)_{ij}=\left\{
                 \begin{array}{ll}
                   w_i^{(m)}, & \hbox{$i=j$;} \\
                   0, & \hbox{otherwise,}
                 \end{array}
               \right.
 \ee $w_j^{(m)}$ are defined in \eqref{wj}.
Then $\la D_mH_mD_m u,v\ra_{\R^{m+1}}=\la
u,D_mH_mD_mv\ra_{\R^{m+1}},$ where \be\label{ipr}\la
u,v\ra_{\R^{m+1}}:=\sum_{j=0}^{m}u_jv_j,\quad u,v\in\R^{m+1}.\ee We
have \be\begin{split} \la
Q^{(m)}u,v\ra_{W^{m}}&=\sum_{j=0}^mw_j^{(m)} (Q^{(m)}
u)_jv_j=\sum_{j=0}^{m} (D_mH_mD_m u)_jv_j\\
&=\la D_mH_mD_mu,v\ra_{\R^{m+1}}=\la u,D_mH_mD_mv\ra_{\R^{m+1}}\\
&=\sum_{j=0}^{m}u_j(D_mH_mD_mv)_j=\sum_{j=0}^{m}u_jw_j^{(m)}(H_mD_mv)_j\\
&=\la u,Q^{(m)}v\ra_{W^{m}}.
\end{split}\ee Thus, $Q^{(m)}$ is self-adjoint with respect to inner
product \eqref{ipWm}. We have \be\begin{split} (H_m)_{ij}&=\int_0^b
e^{-(p_i+p_j)t}dt=\int_0^b
e^{-p_it}e^{-p_jt}dt \\
&=\la \phi_i,\phi_j\ra_{X_{0,b}},\quad
\phi_i(t):=e^{-p_it},\end{split}\ee where $\la
\cdot,\cdot\ra_{X_{0,b}}$ is defined in \eqref{ipX}. This shows that
$H_m$ is a Gram matrix. Therefore, \be\label{Huu} \la
H_mu,u\ra_{\R^{m+1}}\geq 0,\ \forall u\in \R^{m+1}. \ee This implies
\be \la Q^{(m)}u,u\ra_{W^{m}}=\la Q^{(m)}u, D_mu\ra_{\R^{m+1}}=\la
H_mD_mu,D_mu\ra_{\R^{m+1}}\geq 0. \ee Thus, $Q^{(m)}$ is a positive
semidefinite and self-adjoint matrix with respect to the inner
product \eqref{ipWm}.
\end{proof}

\begin{lem}\label{lemTm}
Let $T^{(m)}$ be defined in \eqref{Tm}. Then $T^{(m)}$ is
self-adjoint and positive semidefinite operator in $X_{0,b}$ with
respect to inner product \eqref{ipX}.
\end{lem}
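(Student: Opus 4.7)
The plan is to exploit the factorization $T^{(m)} = \mathcal{L}_m^*\mathcal{L}_m$ that was already established in equation \eqref{Tm}, together with the adjoint relation $\langle \mathcal{L}_m g, v\rangle_{W^m} = \langle g, \mathcal{L}_m^* v\rangle_{X_{0,b}}$ derived earlier. The statement is then an instance of the general Hilbert-space fact that $A^*A$ is self-adjoint and positive semidefinite whenever $A$ is a bounded linear operator between Hilbert spaces; here the two Hilbert spaces are $X_{0,b}$ (with inner product \eqref{ipX}) and $\R^{m+1}$ equipped with the weighted inner product \eqref{ipWm}.

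For self-adjointness I would simply compute, for arbitrary $g,h\in X_{0,b}$,
\begin{equation*}
\langle T^{(m)}g, h\rangle_{X_{0,b}} = \langle \mathcal{L}_m^*\mathcal{L}_m g,h\rangle_{X_{0,b}} = \langle \mathcal{L}_m g,\mathcal{L}_m h\rangle_{W^m} = \langle g,\mathcal{L}_m^*\mathcal{L}_m h\rangle_{X_{0,b}} = \langle g,T^{(m)}h\rangle_{X_{0,b}},
\end{equation*}
using the adjoint identity twice. For positive semidefiniteness I would set $h=g$ to obtain
\begin{equation*}
\langle T^{(m)}g,g\rangle_{X_{0,b}} = \langle \mathcal{L}_m g,\mathcal{L}_m g\rangle_{W^m} = \|\mathcal{L}_m g\|_{W^m}^2 \geq 0.
\end{equation*}

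The only nontrivial point to verify is that the bracket $\langle\cdot,\cdot\rangle_{W^m}$ is genuinely a (semi-)inner product that makes the last quantity nonnegative. This reduces to checking that the Simpson weights $w_j^{(m)}$ defined in \eqref{wj} are all strictly positive, which is immediate from inspection of the three cases in the definition. Thus $\|\cdot\|_{W^m}^2\geq 0$ automatically.

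I do not anticipate any real obstacle: the proof is essentially a two-line application of the adjoint identity that was already established in the paragraph containing \eqref{amkbs}. The only thing to be careful about is to justify the use of that identity with arbitrary $g,h\in X_{0,b}$ and arbitrary $v\in\R^{m+1}$ (which it is, since the identity was derived under those generality conditions), and to note the positivity of the Simpson weights when invoking the semi-norm interpretation of $\langle\cdot,\cdot\rangle_{W^m}$.
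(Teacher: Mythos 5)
Your proof is correct and follows essentially the same route as the paper: the paper's argument is just your abstract $\mathcal{L}_m^*\mathcal{L}_m$ computation written out explicitly with the integrals, and its positivity step likewise reduces to $\sum_{j=0}^m w_j^{(m)}\bigl(\int_0^b e^{-p_jz}g(z)\,dz\bigr)^2\geq 0$, i.e.\ to the nonnegativity of the Simpson weights that you also verify. No gaps.
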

\begin{proof}
From definition \eqref{Tm} and inner product \eqref{ipX} we get
\be\begin{split} \la T^{(m)}g,h\ra_{X_{0,b}}&=\int_0^b\sum_{j=0}^m
w_j^{(m)}e^{-p_jt}\int_0^b e^{-p_j z}g(z)dzh(t)dt \\
&=\int_0^bg(z)\sum_{j=0}^mw_j^{(m)}e^{-p_jz}\int_0^be^{-p_jt}h(t)dtdz\\
&=\la g,T^{(m)}h\ra_{X_{0,b}}.
\end{split}\ee Thus, $T^{(m)}$ is a self-adjoint operator with respect to inner product
\eqref{ipX}. Let us prove that $T^{(m)}$ is positive semidefinite.
Using \eqref{Tm}, \eqref{wj}, \eqref{ipWm} and \eqref{ipX}, one gets
\be
\begin{split}\la T^{(m)}g,g\ra_{X_{0,b}}&=\int_0^b\sum_{j=0}^m
w_j^{(m)}e^{-p_jt}\int_0^b e^{-p_j z}g(z)dzg(t)dt \\
&=\sum_{j=0}^mw_j^{(m)}\int_0^b e^{-p_j z}g(z)dz\int_0^b e^{-p_j
t}g(t)dt\\
&=\sum_{j=0}^mw_j^{(m)}\left(\int_0^b e^{-p_j z}g(z)dz\right)^2\geq
0.
\end{split}\ee
\lemref{lemTm} is proved.
\end{proof}
From \eqref{amkbs} we get
Range$[\mathcal{L}_m^*]=span\{w_j^{(m)}k(p_j,\cdot,0)\}_{j=0}^m,$
where \be\label{kptz}k(p,t,z):=e^{-p(t+z)}.\ee Let us approximate
 the unknown $f(t)$  as follows:
\be\label{aps}f(t)\approx\sum_{j=0}^m c_j^{(m)}
w_j^{(m)}e^{-p_jt}=T_{a,m}^{-1}\mathcal{L}_m^*F^{(m)}:=f_m(t),\ee
where $p_j$ are defined in \eqref{pj}, $T_{a,m}$ is defined in
\eqref{QTam}, and $c_j^{(m)}$ are constants obtained by solving the
linear algebraic system: \be\label{LAS0}
(aI+Q^{(m)})c^{(m)}=F^{(m)}, \ee where $Q^{(m)}$ is defined in
\eqref{Qm}, \be c^{(m)}:=\left(
                                                \begin{array}{c}
                                                  c_0^{(m)} \\
                                                  c_1^{(m)} \\
                                                  \vdots \\
                                                  c_m^{(m)} \\
                                                \end{array}
                                              \right)\quad and\quad F^{(m)}:=\left(
                                                \begin{array}{c}
                                                  F(p_0) \\
                                                  F(p_1) \\
                                                  \vdots \\
                                                  F(p_m) \\
                                                \end{array}
                                              \right).
 \ee
To prove the convergence of the approximate solution $f(t)$, we use
the following estimates, which are proved in \cite{RAMM499}, so
their proofs are omitted.
\begin{lem}\label{aBm} Let $T^{(m)}$ and $Q^{(m)}$ be defined in
\eqref{Tm} and \eqref{Qm}, respectively. Then, for $a>0$, the
following estimates hold:
\be\label{eQam}\|Q_{a,m}^{-1}\mathcal{L}_m\|\leq
\frac{1}{2\sqrt{a}},\ee \be\label{eaQam}a\|Q_{a,m}^{-1}\|\leq 1,\ee
\be\label{eTam}\|T_{a,m}^{-1}\|\leq \frac{1}{a},\ee
\be\label{eTLm}\|T_{a,m}^{-1}\mathcal{L}_m^*\|\leq
\frac{1}{2\sqrt{a}},\ee where \be\label{QTam} Q_{a,m}:=Q^{(m)}+aI
\quad T_{a,m}:=T^{(m)}+aI, \ee $I$ is the identity operator and
$a=const>0.$
\end{lem}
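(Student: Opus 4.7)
The plan is to derive all four estimates from the spectral calculus for the positive semidefinite self-adjoint operators $Q^{(m)}$ (on $\R^{m+1}$ with the $W^m$ inner product) and $T^{(m)}$ (on $X_{0,b}$ with the $L^2$ inner product), using the factorizations $Q^{(m)}=\mathcal{L}_m\mathcal{L}_m^*$ and $T^{(m)}=\mathcal{L}_m^*\mathcal{L}_m$ established in \eqref{Qm} and \eqref{Tm}. By the previous two lemmas, both $Q^{(m)}$ and $T^{(m)}$ have spectrum in $[0,\infty)$, so the functional calculus applies.

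First, I would dispatch \eqref{eaQam} and \eqref{eTam}. Since $Q^{(m)}\ge 0$, we have $Q_{a,m}\ge aI$ with respect to $\la\cdot,\cdot\ra_{W^m}$, hence $\|Q_{a,m}^{-1}\|=\sup_{\lambda\in \text{spec}(Q^{(m)})}(\lambda+a)^{-1}\le 1/a$, which rearranges to \eqref{eaQam}. The identical argument applied to $T^{(m)}$ on $(X_{0,b},\la\cdot,\cdot\ra_{X_{0,b}})$ gives \eqref{eTam}.

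Next, for \eqref{eQam}, I would pass to the adjoint and use the $C^*$-identity:
\begin{equation*}
\|Q_{a,m}^{-1}\mathcal{L}_m\|^2=\|Q_{a,m}^{-1}\mathcal{L}_m(Q_{a,m}^{-1}\mathcal{L}_m)^*\|=\|Q_{a,m}^{-1}\mathcal{L}_m\mathcal{L}_m^*Q_{a,m}^{-1}\|=\|Q_{a,m}^{-1}Q^{(m)}Q_{a,m}^{-1}\|,
\end{equation*}
where the adjoint $(Q_{a,m}^{-1}\mathcal{L}_m)^*=\mathcal{L}_m^*Q_{a,m}^{-1}$ is taken with respect to the correct pair of inner products (this uses self-adjointness of $Q_{a,m}^{-1}$ in $W^m$, which follows from the previous lemma). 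By the spectral theorem the last norm equals $\sup_{\lambda\ge 0}\lambda(\lambda+a)^{-2}$, whose supremum is $1/(4a)$, attained at $\lambda=a$. Taking square roots gives \eqref{eQam}. Estimate \eqref{eTLm} is obtained in exactly the same way, now starting from $T_{a,m}^{-1}\mathcal{L}_m^*$ and using $\mathcal{L}_m^*\mathcal{L}_m=T^{(m)}$ together with the self-adjointness of $T^{(m)}$ in $X_{0,b}$.

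The only real subtlety — and the main thing one must be careful about — is the bookkeeping of adjoints and norms in the mixed pair of spaces: the source and target carry different inner products ($\la\cdot,\cdot\ra_{X_{0,b}}$ and $\la\cdot,\cdot\ra_{W^m}$), and the identities $\mathcal{L}_m\mathcal{L}_m^*=Q^{(m)}$, $\mathcal{L}_m^*\mathcal{L}_m=T^{(m)}$ are valid only because $\mathcal{L}_m^*$ was defined in \eqref{amkbs} precisely as the adjoint with respect to those weighted inner products. Once this is in place, the four estimates reduce to the scalar inequality $\lambda(\lambda+a)^{-2}\le 1/(4a)$ on $[0,\infty)$, and the proof is straightforward.
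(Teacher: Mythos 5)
Your proof is correct. Note that the paper itself gives no proof of this lemma---it explicitly states that the estimates are proved in the cited monograph \cite{RAMM499} and omits the argument---but your derivation (spectral calculus for the positive semidefinite self-adjoint operators $Q^{(m)}$ and $T^{(m)}$ in their respective weighted inner products, the factorizations $Q^{(m)}=\mathcal{L}_m\mathcal{L}_m^*$, $T^{(m)}=\mathcal{L}_m^*\mathcal{L}_m$, and the scalar bound $\sup_{\lambda\ge 0}\lambda(\lambda+a)^{-2}=1/(4a)$) is exactly the standard proof given there, and your care with the adjoints across the pair $(X_{0,b},W^m)$ is the right point to emphasize.
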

Estimates \eqref{eQam} and \eqref{eaQam} are used in proving
inequality \eqref{acm}, while estimates \eqref{eTam} and
\eqref{eTLm} are used in the proof of lemmas 2.9 and 2.10,
respectively.

Let us formulate an iterative method for obtaining the approximation
solution of $f(t)$ with the exact data $F(p)$. Consider the
following iterative scheme \be\label{it1}
u_n(t)=qu_{n-1}(t)+(1-q)T_{a_n}^{-1}\mathcal{L}^*F,\quad u_0(t)=0,
\ee where $\mathcal{L}^*$ is the adjoint of the operator
$\mathcal{L}$, i.e., \be\label{Lad}
(\mathcal{L}^*g)(t)=\int_0^de^{-pt}g(p)dp, \ee
\be\label{T}\begin{split}
(Tf)(t)&:=(\mathcal{L}^*\mathcal{L}f)(t)=\int_0^b \int_0^dk(p,t,z)dp
f(z)dz\\
&=\int_0^b\frac{f(z)}{t+z}\left(1-e^{-d(t+z)}\right)dz,\end{split}\ee
$k(p,t,z)$ is defined in \eqref{kptz}, \be\label{Ta} T_a:=aI+T,\quad
a>0, \ee \be\label{san} a_n:=qa_{n-1},\quad a_0>0,\quad q\in(0,1).
\ee
\begin{lem}\label{lem12}
Let $T_a$ be defined in \eqref{Ta}, $\mathcal{L}f=F$, and $f\perp
\mathcal{N}(\mathcal{L})$, where $\mathcal{N}(\mathcal{L})$ is the
null space of $\mathcal{L}$. Then \be a\|T_a^{-1}f\|\to 0\quad
\text{as } a\to 0. \ee
\end{lem}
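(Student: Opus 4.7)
The plan is to exploit the spectral decomposition of the self-adjoint, non-negative operator $T=\mathcal{L}^*\mathcal{L}$ and reduce the claim to a dominated-convergence statement for the spectral measure of $f$.

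First, I would record the structural facts about $T$ that are needed. The operator $\mathcal{L}:X_{0,b}\to L^2[0,d]$ is compact (a Hilbert--Schmidt integral operator with smooth kernel, as noted in the introduction), hence $T=\mathcal{L}^*\mathcal{L}$ is compact, self-adjoint, and positive semidefinite on $X_{0,b}$. A standard observation gives $\mathcal{N}(T)=\mathcal{N}(\mathcal{L})$: the inclusion $\mathcal{N}(\mathcal{L})\subset \mathcal{N}(T)$ is trivial, and conversely $Tf=0$ implies $\la Tf,f\ra = \|\mathcal{L}f\|^2=0$, so $\mathcal{L}f=0$. Therefore the hypothesis $f\perp \mathcal{N}(\mathcal{L})$ is equivalent to $f\perp \mathcal{N}(T)$.

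Second, I would use the spectral theorem for compact self-adjoint operators: there is an orthonormal system of eigenfunctions $\{\phi_j\}$ with eigenvalues $\lambda_j>0$, together with the orthogonal projection $P_0$ onto $\mathcal{N}(T)$, such that any $g\in X_{0,b}$ decomposes as $g = P_0 g + \sum_j \la g,\phi_j\ra \phi_j$. Applied to $f$, the condition $f\perp \mathcal{N}(T)$ gives $P_0 f = 0$, so $f = \sum_j \la f,\phi_j\ra \phi_j$ with $\sum_j |\la f,\phi_j\ra|^2 = \|f\|^2 < \infty$. Since $T_a$ acts diagonally in this basis by $(a+\lambda_j)$, one obtains
\begin{equation*}
\|aT_a^{-1} f\|^2 \;=\; \sum_{j}\frac{a^2}{(a+\lambda_j)^2}\,|\la f,\phi_j\ra|^2 .
\end{equation*}

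Third, I would pass to the limit $a\to 0^+$ inside the sum. The summand $\frac{a^2}{(a+\lambda_j)^2}|\la f,\phi_j\ra|^2$ is dominated uniformly in $a$ by the summable sequence $|\la f,\phi_j\ra|^2$, and for each fixed $j$ (with $\lambda_j>0$) it tends to $0$. By dominated convergence for series, the total sum tends to $0$, which gives $a\|T_a^{-1}f\|\to 0$ as $a\to 0$, as claimed.

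The main obstacle is conceptual rather than computational: one must recognize that the condition $f\perp \mathcal{N}(\mathcal{L})$ is precisely what kills the otherwise troublesome contribution from the kernel of $T$, where $aT_a^{-1}$ acts as the identity and does not vanish as $a\to 0$. Once $P_0 f = 0$ is in place, the estimate is a routine application of the spectral calculus combined with dominated convergence; no use of the bounds from Lemma~\ref{aBm} is required for this particular statement, since those bounds are only uniform in $a$ and cannot by themselves produce the decay $aT_a^{-1}f\to 0$.
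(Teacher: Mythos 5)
Your proof is correct and follows essentially the same route as the paper: both apply the spectral theorem to $T=\mathcal{L}^*\mathcal{L}$, write $a^2\|T_a^{-1}f\|^2$ as $\int_0^\infty \frac{a^2}{(a+s)^2}\,d\langle E_sf,f\rangle$ (in your case, the eigenfunction-expansion form of this integral, since $T$ is compact), and observe that the limit as $a\to 0$ is the spectral mass at $s=0$, namely $\|P_{\mathcal{N}(\mathcal{L})}f\|^2=0$. Your version merely makes explicit two steps the paper leaves implicit --- the identification $\mathcal{N}(T)=\mathcal{N}(\mathcal{L})$ and the dominated-convergence justification for passing to the limit --- which is a welcome addition but not a different argument.
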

\begin{proof}
Since $f\perp \mathcal{N}(\mathcal{L})$, it follows from the
spectral theorem that
$$\lim_{a\to 0}a^2\|T_a^{-1}f\|^2=\lim_{a\to 0}
\int_0^\infty\frac{a^2}{(a+s)^2}d\langle
E_sf,f\rangle=\|P_{\mathcal{N}(\mathcal{L})}f\|^2=0,$$ where $E_s$
is the resolution of the identity corresponding to
$\mathcal{L}^*\mathcal{L}$, and $P$ is the
orthogonal projector onto $\mathcal{N}(\mathcal{L})$.\\
\lemref{lem12} is proved.
\end{proof}
\begin{thm}\label{thm1}
Let $\mathcal{L}f=F$, and $u_n$ be defined in \eqref{it1} Then
\be\label{rel1} \lim_{n\to \infty}\|f-u_n\|=0. \ee
\end{thm}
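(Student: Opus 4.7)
The plan is to unwind the recurrence \eqref{it1} into an explicit formula and then exploit the crucial identity $T_{a_k}^{-1}\mathcal{L}^*F = T_{a_k}^{-1}Tf = f - a_k T_{a_k}^{-1}f$, which is available because $F=\mathcal{L}f$.

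First I would prove by induction on $n$ that
\be
u_n = (1-q)\sum_{k=1}^{n} q^{n-k} T_{a_k}^{-1}\mathcal{L}^*F.
\ee
Substituting $\mathcal{L}^*F = Tf$ and using $T_{a_k}^{-1}T = I - a_k T_{a_k}^{-1}$, together with the geometric sum $(1-q)\sum_{k=1}^{n} q^{n-k} = 1-q^n$, this gives the clean decomposition
\be
f - u_n = q^n f \,+\, (1-q)\sum_{k=1}^{n} q^{n-k}\, a_k T_{a_k}^{-1} f.
\ee
The first term obviously tends to $0$, so everything reduces to controlling the weighted sum on the right.

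Next I would apply \lemref{lem12}. Since $\mathcal{L}$ is the Laplace transform acting on $X_{0,b}$, a standard uniqueness argument (analyticity of $F$) gives $\mathcal{N}(\mathcal{L})=\{0\}$, so the hypothesis $f\perp \mathcal{N}(\mathcal{L})$ is automatically satisfied. Setting $\epsilon_k := a_k\|T_{a_k}^{-1}f\|$, \lemref{lem12} combined with $a_k = q^k a_0 \to 0$ yields $\epsilon_k\to 0$. Moreover, since $T$ is self-adjoint and nonnegative, the spectral theorem gives the uniform bound $\epsilon_k\leq \|f\|$ for all $k$.

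It remains to check the auxiliary fact that if $\epsilon_k\to 0$ and $|\epsilon_k|\leq M$, then
\be
(1-q)\sum_{k=1}^{n} q^{n-k}\epsilon_k \longrightarrow 0 \quad \text{as } n\to\infty.
\ee
This is the only real bit of work and I expect it to be the main (though still routine) obstacle. I would split the sum at an index $N$ chosen so that $|\epsilon_k|<\eta$ for $k\geq N$: the tail contributes at most $\eta(1-q)\sum_{k=N}^{n}q^{n-k}\leq \eta$, while the head is bounded by $M(1-q)\sum_{k=1}^{N-1}q^{n-k}$, which tends to $0$ as $n\to\infty$ with $N$ fixed. Letting first $n\to\infty$ and then $\eta\to 0$ completes the estimate, and combining this with the decomposition above finishes the proof of \eqref{rel1}.
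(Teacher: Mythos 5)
Your proposal is correct and follows essentially the same route as the paper: the same explicit formula for $u_n$ (your weights $(1-q)q^{n-k}$ are exactly the paper's $\omega_j^{(n)}=q^{n-j-1}-q^{n-j}$ with $k=j+1$), the same decomposition $f-u_n=q^nf+\sum\omega_j^{(n)}a_{j+1}T_{a_{j+1}}^{-1}f$, and your auxiliary fact is precisely the paper's \lemref{lemq}, proved by the same split-the-sum argument. Your added observation that $\mathcal{N}(\mathcal{L})=\{0\}$ (so the hypothesis of \lemref{lem12} holds automatically) is a small but welcome point that the paper leaves implicit.
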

\begin{proof}
By induction we get \be u_n=\sum_{j=0}^{n-1}\omega_j^{(n)}
T_{a_{j+1}}^{-1}\mathcal{L}^*F, \ee where $T_a$ is defined in
\eqref{Ta}, and\be\label{omg} \omega_j^{(n)}:=q^{n-j-1}-q^{n-j}. \ee
Using the identities \be \mathcal{L}f=F, \ee \be
T_a^{-1}\mathcal{L}^*\mathcal{L}=T_a^{-1}(T+aI-aI)=I-aT_{a}^{-1}\ee
and \be \sum_{j=0}^{n-1}\omega_j^{(n)}=1-q^{n}, \ee
 we get \be\begin{split}
f-u_n&=f-\sum_{j=0}^{n-1}\omega_j^{(n)}f+\sum_{j=0}^{n-1}\omega_j^{(n)}a_{j+1}T_{a_{j+1}}^{-1}f\\
&=q^nf+\sum_{j=0}^{n-1}\omega_j^{(n)}a_{j+1}T_{a_{j+1}}^{-1}f.\end{split}
\ee Therefore, \be\label{es1} \|f-u_n\|\leq q^n\|f\|+
\sum_{j=0}^{n-1}\omega_j^{(n)}a_{j+1}\|T_{a_{j+1}}^{-1}f\|.\ee To
prove relation \eqref{rel1} the following lemma is needed:
\begin{lem}\label{lemq} Let $g(x)$ be a continuous function on
$(0,\infty)$, $c>0$ and $q\in(0,1)$ be constants. If \be\label{g}
\lim_{x\to 0^+}g(x)=g(0):=g_0,\ee then \be\label{rel8} \lim_{n\to
\infty}\sum_{j=0}^{n-1}\left(q^{n-j-1}-q^{n-j}\right)g(cq^{j+1})=
g_0. \ee
\end{lem}
\begin{proof}
Let
\be\label{Fn}F_l(n):=\sum_{j=1}^{l-1}\omega_j^{(n)}g(cq^{j+1}),\ee
where $\omega_j^{(n)}$ are defined in \eqref{omg}. Then \bee
|F_{n+1}(n)-g_0|\leq |F_{l}(n)|+\left|\sum_{j=l}^n
\omega_j^{(n)}g(cq^{j+1})-g_0\right|.\eee Take $\epsilon>0$
arbitrarily small. For sufficiently large fixed $l(\epsilon)$ one
can choose $n(\epsilon)>l(\epsilon)$, such that \bee
|F_{l(\epsilon)}(n)|\leq \frac{\epsilon}{2},\ \forall n>n(\epsilon),
\eee because $\lim_{n\to\infty}q^n=0.$ Fix $l=l(\epsilon)$ such that
$|g(cq^j)-g_0|\leq \frac{\epsilon}{2}$ for $j>l(\epsilon)$. This is
possible because of \eqref{g}. One has \bee |F_{l(\epsilon)}(n)|\leq
\frac{\epsilon}{2},\ n>n(\epsilon)>l(\epsilon) \eee and
\bee\begin{split} \left|\sum_{j=l(\epsilon)}^n
\omega_j^{(n)}g(cq^{j+1})-g_0\right|&\leq \sum_{j=l(\epsilon)}^{n}
\omega_j^{(n)}|g(cq^{j+1})-g_0|+|\sum_{j=l(\epsilon)}^{n}
\omega_j^{(n)}-1||g_0|\\
&\leq
\frac{\epsilon}{2}\sum_{j=l(\epsilon)}^n\omega_j^{(n)}+q^{n-l(\epsilon)}|g_0|\\
&\leq \frac{\epsilon}{2}+|g_0|q^{n-l(\epsilon)}\leq
\epsilon,\end{split}\eee if $n(\epsilon)$ is sufficiently large.
Here we have used the relation\bee
\sum_{j=l}^{n}\omega_j^{(n)}=1-q^{n-l}. \eee Since $\epsilon>0$ is
arbitrarily small, relation \eqref{rel8} follows.\\\lemref{lemq} is
proved.
\end{proof}

\lemref{lem12} together with \lemref{lemq} with
$g(a)=a\|T_a^{-1}f\|$ yield \be \lim_{n\to
\infty}\sum_{j=0}^{n-1}\omega_j^{(n)}a_{j+1}\|T_{a_{j+1}}^{-1}f\|=0.
\ee This together with estimate \eqref{es1} and condition
$q\in(0,1)$ yield relation \eqref{rel1}.\\
\thmref{thm1} is proved.
\end{proof}
\begin{lem}\label{lem1}
Let $T$ and $T^{(m)}$ be defined in \eqref{T} and \eqref{Tm},
respectively. Then \be\label{TTm} \|T-T^{(m)}\|\leq
\frac{(2bd)^5}{540\sqrt{10} m^4}. \ee
\end{lem}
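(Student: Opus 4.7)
\medskip

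The plan is to realize $T-T^{(m)}$ as an integral operator on $L^2[0,b]$ whose kernel is precisely the pointwise error of the compound Simpson's rule applied to $p\mapsto e^{-p(t+z)}$ on $[0,d]$, and then bound the operator norm by the Hilbert--Schmidt norm of that kernel.

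First, by \eqref{T} and \eqref{Tm} one may write
\[
((T-T^{(m)})f)(t)=\int_0^b K_m(t,z)f(z)\,dz,\qquad
K_m(t,z):=\int_0^d e^{-p(t+z)}\,dp-\sum_{j=0}^m w_j^{(m)} e^{-p_j(t+z)},
\]
so $K_m(t,z)$ is exactly the error of the compound Simpson's rule (with weights \eqref{wj} and nodes \eqref{pj}) applied on $[0,d]$ to the integrand $\varphi_{t,z}(p):=e^{-p(t+z)}$. The classical error formula for compound Simpson's rule with step $h=d/m$ gives
\[
|K_m(t,z)|\le \frac{d\,h^4}{180}\,\max_{p\in[0,d]}\bigl|\varphi_{t,z}^{(4)}(p)\bigr|
=\frac{d\,h^4}{180}\,(t+z)^4,
\]
since $\varphi_{t,z}^{(4)}(p)=(t+z)^4 e^{-p(t+z)}$ attains its maximum in $p\in[0,d]$ at $p=0$.

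Next, use the standard estimate $\|T-T^{(m)}\|\le \|K_m\|_{L^2([0,b]\times[0,b])}$. Combining with the pointwise bound,
\[
\|T-T^{(m)}\|^2 \le \left(\frac{d\,h^4}{180}\right)^{\!2}\int_0^b\!\!\int_0^b (t+z)^8\,dt\,dz.
\]
A direct calculation of the inner integral (integrate twice in $t$, then in $z$) gives
\[
\int_0^b\!\!\int_0^b (t+z)^8\,dt\,dz=\frac{(2b)^{10}-2b^{10}}{90}\le \frac{(2b)^{10}}{90}.
\]
Substituting $h=d/m$ and using $\sqrt{90}=3\sqrt{10}$ and $\sqrt{(2b)^{10}}=(2b)^5$, I get
\[
\|T-T^{(m)}\|\le \frac{d\,(d/m)^4}{180}\cdot\frac{(2b)^5}{3\sqrt{10}}
=\frac{(2b)^5 d^5}{540\sqrt{10}\,m^4}=\frac{(2bd)^5}{540\sqrt{10}\,m^4},
\]
which is \eqref{TTm}.

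The only mildly subtle point is the constant bookkeeping: one needs both the exact compound-Simpson remainder (to pick up the factor $180$) and the sharp evaluation of $\int_0^b\!\int_0^b (t+z)^8\,dt\,dz$, because replacing $(t+z)^8$ by its crude maximum $(2b)^8$ would overshoot the stated constant. Using instead the bound $(2b)^{10}/90$ for the double integral is what lines up the numerical factor as $540\sqrt{10}$ and produces the compact form $(2bd)^5$. Everything else is routine: self-adjointness of $T$ and $T^{(m)}$ on $X_{0,b}$ (already established in \lemref{lemTm}) ensures the Hilbert--Schmidt bound is applicable in the $L^2[0,b]$ norm used throughout.
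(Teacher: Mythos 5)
Your proof is correct and follows essentially the same route as the paper: the paper also bounds the kernel pointwise by the compound Simpson remainder $\frac{d^5}{180m^4}(t+z)^4$, applies Cauchy--Schwarz in $z$ and integrates in $t$ (which is exactly your Hilbert--Schmidt estimate), and evaluates $\int_0^b\!\int_0^b(t+z)^8\,dt\,dz=\bigl[(2b)^{10}-2b^{10}\bigr]/90\le (2b)^{10}/90$ to land on the constant $540\sqrt{10}$. The constant bookkeeping matches the paper's line by line.
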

\begin{proof}
From definitions \eqref{T} and \eqref{Tm} we get \be\begin{split}
&|(T-T^{(m)})f(t)|\leq \int_0^b\left|\int_0^d
k(p,t,z)dp-\sum_{j=0}^m w_j^{(m)}k(p_j,t,z)
\right||f(z)|dz\\
&\leq \int_0^b\left|\frac{d^5}{180m^4}\max_{p\in[0,d]}(t+z)^4e^{-p(t+z)}\right||f(z)|dz\\
&=\int_0^b\frac{d^5}{180m^4}(t+z)^4|f(z)|dz\leq \frac{d^5}{180m^4}\left(\int_0^b(t+z)^8dz\right)^{1/2}\|f\|_{X_{0,b}}\\
&=\frac{d^5}{180m^4}\left[\frac{(t+b)^9-t^9}{9}\right]^{1/2}\|f\|_{X_{0,b}},
\end{split}\ee where the following upper bound for the error of the compound Simpson's rule was
used (see \cite[p.58]{PDPR84}): for $f\in C^{(4)}[x_0,x_{2l}],$
$x_0<x_{2l}$, \be
\left|\int_{x_0}^{x_{2l}}f(x)dx-\frac{h}{3}\left[f_0+4\sum_{j=1}^lf_{2(j-1)}+2\sum_{j=1}^{l-1}f_{2j}+f_{x_{2l}}\right]\right|\leq
R_l, \ee where \be f_j:=f(x_j),\quad x_j=x_0+jh,\
j=0,1,2,\hdots,2l,\ h=\frac{x_{2l}-x_0}{2l},\ee and \be
R_l=\frac{(x_{2l}-x_0)^5}{180(2l)^4}|f^{(4)}(\xi)|,\quad
x_0<\xi<x_{2l}. \ee This implies \be \|(T-T^{(m)})f\|_{X_{0,b}}\leq
\frac{d^5}{540
m^4}\left[\frac{(2b)^{10}-2b^{10}}{10}\right]^{1/2}\|f\|_{X_{0,b}}\leq
\frac{(2bd)^5}{540\sqrt{10} m^4}\|f\|_{X_{0,b}},
\ee so estimate \eqref{TTm} is obtained.\\
\lemref{lem1} is proved.
\end{proof}

\begin{lem}\label{lem2}
Let $0<a<a_0$,\be\label{rulem}
m=\kappa\left(\frac{a_0}{a}\right)^{1/4},\ \quad \kappa>0. \ee Then
\be\label{es2} \|T-T^{(m)}\|\leq \frac{(2bd)^5}{540\sqrt{10}
a_0\kappa^4}a, \ee where $T$ and $T^{(m)}$ are defined in \eqref{T}
and \eqref{Tm}, respectively.
\end{lem}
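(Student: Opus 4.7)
The plan is to apply \lemref{lem1} directly and substitute the specified value of $m$. The statement in \lemref{lem1} gives a bound on $\|T-T^{(m)}\|$ that is proportional to $1/m^4$, and the choice $m=\kappa(a_0/a)^{1/4}$ is clearly engineered so that $1/m^4$ becomes exactly proportional to $a$. So the whole proof reduces to a one-line substitution; there is no real obstacle.

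In more detail, I would first recall the bound
\bee
\|T-T^{(m)}\|\leq \frac{(2bd)^5}{540\sqrt{10}\,m^4}
\eee
from \lemref{lem1}. Next, I would compute $m^4$ from the assumption \eqref{rulem}: since $m=\kappa(a_0/a)^{1/4}$, raising to the fourth power gives $m^4=\kappa^4 a_0/a$, hence
\bee
\frac{1}{m^4}=\frac{a}{\kappa^4 a_0}.
\eee
Substituting this into the bound above yields
\bee
\|T-T^{(m)}\|\leq \frac{(2bd)^5}{540\sqrt{10}}\cdot\frac{a}{\kappa^4 a_0}=\frac{(2bd)^5}{540\sqrt{10}\,a_0\kappa^4}\,a,
\eee
which is exactly \eqref{es2}.

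The hypothesis $0<a<a_0$ is not needed for the inequality itself; its role is only to ensure that $m=\kappa(a_0/a)^{1/4}$ exceeds $\kappa$, so that (after rounding up to an even integer, as required in \eqref{wj}) the parameter $m$ is well-defined and large. Since the rounding can only decrease $1/m^4$, the stated estimate remains valid. Thus the lemma follows immediately from \lemref{lem1}.
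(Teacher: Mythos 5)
Your proof is correct and is exactly the paper's argument: the paper's own proof of this lemma is the one-line observation that \eqref{es2} follows from \eqref{TTm} and \eqref{rulem}, which you have simply spelled out by substituting $m^4=\kappa^4 a_0/a$. Your added remarks on the role of $0<a<a_0$ and the rounding of $m$ are sensible but not part of the paper's proof.
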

\begin{proof}
Inequality \eqref{es2} follows from estimate \eqref{TTm} and formula
\eqref{rulem}.
\end{proof}

\lemref{lem2} leads to an adaptive iterative scheme: \be\label{it2}
u_{n,m_n}(t)=qu_{n-1,m_{n-1}}+(1-q)T_{a_n,m_n}^{-1}\mathcal{L}_{m_n}^*F^{(m_n)},\quad
u_{0,m_0}(t)=0, \ee where $q\in(0,1)$, $a_n$ are defined in
\eqref{san}, $T_{a,m}$ is defined in \eqref{QTam},
$A_{m}\mathcal{L}$ is defined in \eqref{amkb}, and \be\label{Fm}
F^{(m)}:=\left(
                                             \begin{array}{c}
                                               F(p_0) \\
                                               F(p_1) \\
                                               \hdots \\
                                               F(p_{m}) \\
                                             \end{array}
                                           \right)\in \R^{m+1},
\ee $p_j$ are defined in \eqref{pj}. In the iterative scheme
\eqref{it2} we have used the finite-dimensional operator $T^{(m)}$
approximating the operator $T$. Convergence of the iterative scheme
\eqref{it2} to the solution $f$ of the equation $\mathcal{L}f=F$ is
established in the following lemma:
\begin{lem}\label{lem4}
Let $\mathcal{L}f=F$ and $u_{n,m_n}$ be defined in \eqref{it2}. If
$m_n$ are chosen by the rule \be\label{rulm}\begin{split}
m_n&=\left\lceil\left[\kappa
\left(\frac{a_0}{a_n}\right)^{1/4}\right]\right\rceil,\
a_n=qa_{n-1},\
q\in(0,1),\ \kappa, a_0>0,\\
\end{split}\ee where $\lceil[x]\rceil $ is the smallest even number not less than x, then \be \lim_{n\to \infty}\|f-u_{n,m_n}\|=0. \ee
\end{lem}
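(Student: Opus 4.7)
The plan is to mirror the proof of \thmref{thm1} and to absorb the new error introduced by replacing $T$ with $T^{(m)}$ into the same Abel-type summation argument, relying on the fact that the rule \eqref{rulm} is calibrated to give $\|T-T^{(m)}\|=O(a_n)$.

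First I would unroll \eqref{it2} by induction to obtain
\begin{equation*}
u_{n,m_n}=\sum_{j=0}^{n-1}\omega_j^{(n)}T_{a_{j+1},m_{j+1}}^{-1}\mathcal{L}_{m_{j+1}}^*F^{(m_{j+1})},\qquad \omega_j^{(n)}=q^{n-j-1}-q^{n-j},
\end{equation*}
and observe the identity $\mathcal{L}_m^*F^{(m)}=\mathcal{L}_m^*\mathcal{L}_m f=T^{(m)}f$, which follows because the sampled data satisfy $F^{(m)}_i=F(p_i)=(\mathcal{L}_m f)_i$. Using $T_{a,m}^{-1}T^{(m)}=I-aT_{a,m}^{-1}$ together with $\sum_{j=0}^{n-1}\omega_j^{(n)}=1-q^n$, I would arrive at the analogue of the representation in \thmref{thm1}:
\begin{equation*}
f-u_{n,m_n}=q^n f+\sum_{j=0}^{n-1}\omega_j^{(n)}a_{j+1}T_{a_{j+1},m_{j+1}}^{-1}f.
\end{equation*}

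The hard part is to show $g(a):=a\|T_{a,m(a)}^{-1}f\|\to 0$ as $a\to 0^+$, where $m(a)$ is the value prescribed by \eqref{rulm}. I would exploit the resolvent identity $T_{a,m}^{-1}-T_{a}^{-1}=T_{a,m}^{-1}(T-T^{(m)})T_{a}^{-1}$ together with \eqref{eTam} from \lemref{aBm} and the calibrated estimate \eqref{es2} from \lemref{lem2}:
\begin{equation*}
\bigl\|aT_{a,m}^{-1}f-aT_{a}^{-1}f\bigr\|\leq a\|T_{a,m}^{-1}\|\,\|T-T^{(m)}\|\,\|T_{a}^{-1}f\|\leq Ca\|T_{a}^{-1}f\|=C\|aT_{a}^{-1}f\|,
\end{equation*}
with $C=\frac{(2bd)^5}{540\sqrt{10}\,a_0\kappa^4}$. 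Since the Laplace transform is injective on $X_{0,b}$, we have $\mathcal{N}(\mathcal{L})=\{0\}$ and $f\perp\mathcal{N}(\mathcal{L})$ trivially, so \lemref{lem12} yields $\|aT_{a}^{-1}f\|\to 0$; hence $\|aT_{a,m(a)}^{-1}f\|\leq(1+C)\|aT_{a}^{-1}f\|\to 0$, i.e.\ $g(a)\to 0$.

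Finally I would apply \lemref{lemq} with $c=a_0$ to the values $g(a_0q^{j+1})=a_{j+1}\|T_{a_{j+1},m_{j+1}}^{-1}f\|$. A small technical point is that $g$ is not continuous on $(0,\infty)$ because of the ceiling in \eqref{rulm}; however, the proof of \lemref{lemq} only uses the sequence of values $g(cq^{j+1})$ and their limit at $0$, not continuity elsewhere, so the argument goes through verbatim. This gives
\begin{equation*}
\lim_{n\to\infty}\sum_{j=0}^{n-1}\omega_j^{(n)}a_{j+1}\|T_{a_{j+1},m_{j+1}}^{-1}f\|=0,
\end{equation*}
which together with $q^n\|f\|\to 0$ and the representation above yields $\|f-u_{n,m_n}\|\to 0$. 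The essence of the argument, and the main obstacle, is the balance built into the rule \eqref{rulm}: the exponent $1/4$ is precisely what makes $\|T-T^{(m_n)}\|/a_n$ uniformly bounded, so the discretization error does not destroy the $o(1)$ behaviour already available for the idealised iteration \eqref{it1}.
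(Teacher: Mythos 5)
Your proof is correct and takes essentially the same route as the paper: both arguments rest on the resolvent identity $T_{a,m}^{-1}-T_a^{-1}=T_{a,m}^{-1}(T-T^{(m)})T_a^{-1}$, the bound \eqref{eTam}, the calibration $\|T-T^{(m(a))}\|\leq Ca$ from \lemref{lem2}, and then \lemref{lem12} combined with \lemref{lemq} applied to $g(a)=a\|T_a^{-1}f\|$. The only (cosmetic) difference is that you derive the error representation for $f-u_{n,m_n}$ directly via $\mathcal{L}_m^*F^{(m)}=T^{(m)}f$, whereas the paper writes $\|f-u_{n,m_n}\|\leq\|f-u_n\|+\|u_n-u_{n,m_n}\|$ and reuses \thmref{thm1} for the first term; your remarks that $\mathcal{N}(\mathcal{L})=\{0\}$ on $X_{0,b}$ and that \lemref{lemq} only needs convergence of $g$ along the sequence $cq^{j+1}$ are correct and tidy up small points the paper leaves implicit.
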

\begin{proof}
Consider the estimate \be\label{es3} \|f-u_{n,m_n}\|\leq
\|f-u_n\|+\|u_n-u_{n,m_n}\|:=I_1(n)+I_2(n), \ee where
$I_1(n):=\|f-u_n\|$ and $I_2(n):=\|u_n-u_{n,m_n}\|$. By
\thmref{thm1}, we get $I_1(n)\to 0$ as $n\to \infty.$ Let us prove
that $\lim_{n\to \infty}I_2(n)=0.$ Let $U_n:=u_n-u_{n,m_n}.$ Then,
from definitions \eqref{it1} and \eqref{it2}, we get \be
U_n=qU_{n-1}+(1-q)\left(T_{a_n}^{-1}\mathcal{L}^*F-T_{a_n,m_n}^{-1}\mathcal{L}_{m_n}^*F^{(m_n)}
\right),\quad U_0=0. \ee By induction we obtain \be
U_n=\sum_{j=0}^{n-1}\omega_j^{(n)}
\left(T_{a_{j+1}}^{-1}\mathcal{L}^*F-T_{a_{j+1},m_{j+1}}^{-1}(\mathcal{L}_{m_{j+1}})^*F^{(m_{j+1})}\right),
\ee where $\omega_j$ are defined in \eqref{omg}. Using the
identities $\mathcal{L}f=F$, $\mathcal{L}_mf=F^{(m)}$, \be
T_a^{-1}T=T_a^{-1}(T+aI-aI)=I-aT_a^{-1}, \ee \be
T_{a,m}^{-1}T^{(m)}=T_{a,m}^{-1}(T^{(m)}+aI-aI)=I-aT_{a,m}^{-1}, \ee
\be T_{a,m}^{-1}-T_a^{-1}=T_{a,m}^{-1}(T-T^{(m)})T_a^{-1}, \ee one
gets \be\begin{split}
U_n&=\sum_{j=0}^{n-1}\omega_j^{(n)}a_{j+1}\left(T_{a_{j+1},m_{j+1}}^{-1}-T_{a_{j+1}}^{-1}\right)f\\
&=\sum_{j=0}^{n-1}\omega_j^{(n)}a_{j+1}T_{a_{j+1},m_{j+1}}^{-1}\left(T-T^{(m_{j+1})}\right)T_{a_{j+1}}^{-1}f.
\end{split}\ee
This together with the rule \eqref{rulm}, estimate \eqref{eTam} and
\lemref{lem1} yield
\be\begin{split} \|U_n\|&\leq\sum_{j=0}^{n-1}\omega_j^{(n)}a_{j+1}\|T_{a_{j+1},m_{j+1}}^{-1}\|\|T-T^{(m_{j+1})}\|\|T_{a_{j+1}}^{-1}f\|\\
&\leq \frac{(2bd)^5}{540\sqrt{10}
a_0\kappa^4}\sum_{j=0}^{n-1}\omega_j^{(n)}a_{j+1}\|T_{a_{j+1}}^{-1}f\|.
\end{split} \ee Applying \lemref{lem12} and \lemref{lemq} with $g(a)=a\|T_a^{-1}f\|$,
we obtain $\lim_{n\to \infty}\|U_n\|=0.$\\
\lemref{lem4} is proved.
\end{proof}

\subsection{Noisy data}
When the data $F(p)$ are noisy, the approximate solution \eqref{aps}
is written as \be\label{fmdel} f_m^\dl(t)=\sum_{j=0}^m
w_j^{(m)}c_j^{(m,\dl)}e^{-p_j
t}=T_{a,m}^{-1}\mathcal{L}^*_mF_\dl^{(m)}, \ee where the
coefficients $c_j^{(m,\dl)}$ are obtained by solving the following
linear algebraic system: \be\label{LASdel}
Q_{a,m}c^{(m,\dl)}=F_\dl^{(m)}, \ee $Q_{a,m}$ is defined in
\eqref{QTam}, \be\label{Fmdel} c^{(m,\dl)}:=\left(
                                             \begin{array}{c}
                                               c_0^{(m,\dl)} \\
                                               c_1^{(m,\dl)} \\
                                               \hdots \\
                                               c_m^{(m,\dl)}\\
                                             \end{array}
                                           \right),\quad F_\dl^{(m)}:=\left(
                                             \begin{array}{c}
                                               F_\dl(p_0) \\
                                               F_\dl(p_1) \\
                                               \hdots \\
                                               F_\dl(p_{m}) \\
                                             \end{array}
                                           \right),
 \ee $w_j^{(m)}$ are defined in \eqref{wj}, and $p_j$ are defined in
\eqref{pj}.

To get the approximation solution of the function $f(t)$ with the
noisy data $F_\dl(p)$, we consider the following iterative
scheme:\be
\label{it3}u_{n,m_n}^\dl=qu_{n-1,m_{n-1}}^\dl+(1-q)T_{a_n,m_n}^{-1}\mathcal{L}_{m_n}^*F^{(m_n)}_\dl,\quad
u_{0,m_0}^\dl=0,\ee where $T_{a,m}$ is defined in \eqref{QTam},
$a_n$ are defined in \eqref{san}, $q\in(0,1)$,  $F_\dl^{(m)}$ is
defined in \eqref{Fmdel}, and $m_n$ are chosen by the rule
\eqref{rulm}. Let us assume that \be\label{as1}
F_\dl(p_j)=F(p_j)+\dl_j,\quad 0<|\dl_j|\leq \dl,\quad
j=0,1,2,\hdots,m, \ee where $\dl_j$ are random quantities generated
from some statistical distributions, e.g., the uniform distribution
on the interval $[-\dl,\dl]$, and $\dl$ is the noise level of the
data $F(p)$. It follows from assumption \eqref{as1}, definition
\eqref{wj}, \lemref{lemswj} and the inner product \eqref{ipWm} that
\be\label{es4}
\|F^{(m)}_\dl-F^{(m)}\|_{W^m}^2=\sum_{j=0}^mw_j^{(m)}\dl_j^2\leq
\dl^2\sum_{j=0}^mw_j^{(m)}=\dl^2d. \ee
\begin{lem}\label{lem5} Let $u_{n,m_n}$ and $u_{n,m_n}^\dl$ be defined in
\eqref{it2} and \eqref{it3}, respectively. Then \be\label{es6}
\|u_{n,m_n}-u_{n,m_n}^\dl\|\leq
\frac{\sqrt{d}\dl}{2\sqrt{a_n}}(1-q^n),\quad q\in(0,1), \ee where
$a_n$ are defined in \eqref{san}.
\end{lem}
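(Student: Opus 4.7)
The plan is to mirror the telescoping argument used in the proof of \thmref{thm1} and \lemref{lem4}: set $V_n := u_{n,m_n} - u_{n,m_n}^\dl$, subtract the two recurrences \eqref{it2} and \eqref{it3}, and use the linearity in the data together with the bounds already established in \lemref{aBm} and estimate \eqref{es4}.

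First I would observe that, since the map $F^{(m)}\mapsto T_{a_n,m_n}^{-1}\mathcal{L}_{m_n}^*F^{(m_n)}$ is linear and both iterations start from $0$, subtraction gives
\bee
V_n \;=\; qV_{n-1} \;+\; (1-q)\,T_{a_n,m_n}^{-1}\mathcal{L}_{m_n}^*\bigl(F^{(m_n)}-F_\dl^{(m_n)}\bigr),\qquad V_0=0.
\eee
Then I would unroll this recurrence by induction (exactly as in \thmref{thm1}) to obtain the closed form
\bee
V_n \;=\; \sum_{j=0}^{n-1}\omega_j^{(n)}\,T_{a_{j+1},m_{j+1}}^{-1}\mathcal{L}_{m_{j+1}}^*\bigl(F^{(m_{j+1})}-F_\dl^{(m_{j+1})}\bigr),
\eee
with $\omega_j^{(n)}=q^{n-j-1}-q^{n-j}$ as in \eqref{omg}.

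Next I would take norms and apply two ingredients already in hand. Estimate \eqref{eTLm} gives $\|T_{a,m}^{-1}\mathcal{L}_m^*\|\le \tfrac{1}{2\sqrt{a}}$, while \eqref{es4} yields $\|F^{(m)}-F_\dl^{(m)}\|_{W^m}\le \sqrt{d}\,\dl$. Combining these with the triangle inequality gives
\bee
\|V_n\| \;\le\; \sum_{j=0}^{n-1}\omega_j^{(n)}\,\frac{\sqrt{d}\,\dl}{2\sqrt{a_{j+1}}}.
\eee
Since $a_n=q^na_0$ and $q\in(0,1)$, we have $a_{j+1}\ge a_n$ for all $0\le j\le n-1$, so $\tfrac{1}{\sqrt{a_{j+1}}}\le\tfrac{1}{\sqrt{a_n}}$. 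Pulling this factor out and using the telescoping identity $\sum_{j=0}^{n-1}\omega_j^{(n)}=1-q^n$ (already noted in the proof of \thmref{thm1}) yields the claimed bound \eqref{es6}.

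The argument is essentially bookkeeping on top of \lemref{aBm} and \eqref{es4}, so there is no serious obstacle; the only place one has to be slightly careful is the monotonicity step $a_{j+1}\ge a_n$, which is what allows the index-dependent factor $1/\sqrt{a_{j+1}}$ to be replaced by the uniform $1/\sqrt{a_n}$ demanded by the statement.
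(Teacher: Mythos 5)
Your proof is correct and follows essentially the same route as the paper's: subtract the two recurrences, unroll by induction to the weighted sum with $\omega_j^{(n)}$, apply \eqref{eTLm} and \eqref{es4}, and use the monotonicity $a_{j+1}\ge a_n$ together with $\sum_{j=0}^{n-1}\omega_j^{(n)}=1-q^n$. The monotonicity step you flag explicitly is used silently in the paper, but the argument is identical.
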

\begin{proof}
Let $U_n^\dl:=u_{n,m_n}-u_{n,m_n}^\dl$. Then, from definitions
\eqref{it2} and \eqref{it3}, \be
U_n^\dl=qU_{n-1}^\dl+(1-q)T_{a_n,m_n}^{-1}\mathcal{L}_{m_n}^*(F^{(m_{n})}-F^{(m_{n})}_\dl),\quad
U_0^\dl=0. \ee By induction we obtain \be
U_n^\dl=\sum_{j=0}^{n-1}\omega_j^{(n)}T_{a_{j+1},m_{j+1}}^{-1}(\mathcal{L}_{m_{j+1}})^*(F^{(m_{j+1})}-F^{(m_{j+1})}_\dl),
\ee where $\omega_j^{(n)}$ are defined in \eqref{omg}. Using
estimates \eqref{es4} and inequality \eqref{eTLm}, one gets
\be\label{es7} \|U_n^\dl\|\leq
\sqrt{d}\sum_{j=0}^{n-1}\omega_j^{(n)}\frac{\dl}{2\sqrt{a_{j+1}}}\leq
\frac{\sqrt{d}\dl}{2\sqrt{a_n}}\sum_{j=0}^m\omega_j^{(n)}=\frac{\sqrt{d}\dl}{2\sqrt{a_n}}(1-q^n),
\ee where $\omega_j$ are defined in \eqref{omg}.\\ \lemref{lem5} is
proved.
\end{proof}

\begin{thm}\label{thm2}Suppose that conditions of \lemref{lem4} hold, and $n_\dl$ satisfies the following conditions:
\be\label{rel4} \lim_{\dl \to 0} n_{\dl}=\infty,\quad \lim_{\dl\to
0}\frac{\dl}{\sqrt{a_{n_{\dl}}}}=0. \ee Then\be\label{rel5}
\lim_{\dl\to 0}\|f-u_{n_\dl,m_{n_\dl}}^\dl\|=0. \ee
\end{thm}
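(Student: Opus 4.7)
The plan is to use the triangle inequality to split the error into a noise-free iteration error plus a data-perturbation error, and then apply the two immediately preceding results: \lemref{lem4} controls the first piece and \lemref{lem5} controls the second piece. The two hypotheses on $n_\dl$ in \eqref{rel4} are exactly what is needed to drive each piece to zero.

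Concretely, first I would write
\be\label{planEq1}
\|f-u_{n_\dl,m_{n_\dl}}^\dl\|\leq \|f-u_{n_\dl,m_{n_\dl}}\|+\|u_{n_\dl,m_{n_\dl}}-u_{n_\dl,m_{n_\dl}}^\dl\|.
\ee
For the first summand, since the rule \eqref{rulm} for choosing $m_n$ is assumed (this is part of the hypotheses of \lemref{lem4}), \lemref{lem4} gives $\|f-u_{n,m_n}\|\to 0$ as $n\to\infty$. Combined with the first condition $\lim_{\dl\to 0}n_\dl=\infty$ in \eqref{rel4}, the first summand tends to $0$ as $\dl\to 0$.

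For the second summand I would invoke the bound \eqref{es6} of \lemref{lem5}:
\bee
\|u_{n_\dl,m_{n_\dl}}-u_{n_\dl,m_{n_\dl}}^\dl\|\leq \frac{\sqrt{d}\,\dl}{2\sqrt{a_{n_\dl}}}(1-q^{n_\dl})\leq \frac{\sqrt{d}}{2}\cdot\frac{\dl}{\sqrt{a_{n_\dl}}},
\eee
which tends to $0$ by the second condition $\lim_{\dl\to 0}\dl/\sqrt{a_{n_\dl}}=0$ in \eqref{rel4}. Combining both estimates gives \eqref{rel5}.

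There is no real obstacle here: the theorem is essentially a bookkeeping statement that packages \lemref{lem4} and \lemref{lem5} together via the triangle inequality, and the two conditions in \eqref{rel4} are designed precisely to match the two terms in \eqref{planEq1}. The only point requiring a moment of care is noting that the hypotheses of \lemref{lem4} (namely $\mathcal{L}f=F$, the choice \eqref{rulm} of $m_n$, and $q\in(0,1)$) are inherited from the assumption that the conditions of \lemref{lem4} hold, so that its conclusion is available to apply along any sequence $n_\dl\to\infty$.
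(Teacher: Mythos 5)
Your proposal is correct and follows exactly the paper's own argument: the same triangle-inequality decomposition, with \lemref{lem4} and condition $n_\dl\to\infty$ handling the noise-free term and the bound \eqref{es6} of \lemref{lem5} together with $\dl/\sqrt{a_{n_\dl}}\to 0$ handling the perturbation term. No issues.
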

\begin{proof}
Consider the estimate: \be \|f-u_{n_\dl,m_{n_\dl}}^\dl\|\leq
\|f-u_{n_\dl,m_{n_\dl}}\|+\|u_{n_\dl,m_{n_\dl}}-u_{n_\dl,m_{n_\dl}}^\dl\|.
\ee This together with \lemref{lem5} yield \be\label{es5}
\|f-u_{n_\dl,m_{n_\dl}}^\dl\|\leq
\|f-u_{n_\dl,m_{n_\dl}}\|+\frac{\sqrt{d}\dl}{2\sqrt{a_{n_\dl}}}(1-q^n).
\ee Applying relations \eqref{rel4} in estimate \eqref{es5}, one gets relation \eqref{rel5}.\\
\thmref{thm2} is proved.
\end{proof}
In the following subsection we propose a stopping rule which implies
relations \eqref{rel4}.

\subsection{Stopping rule}
In this subsection a stopping rule which yields relations
\eqref{rel4} in \thmref{thm2} is given.  We propose the stopping
rule \be\label{dp} G_{n_\dl,m_{n_\dl}}\leq
C\dl^\varepsilon<G_{n,m_n},\quad 1\leq n<n_\dl,\ C>\sqrt{d},\
\varepsilon \in(0,1), \ee where \be\label{Gn}
G_{n,m_n}=qG_{n-1,m_{n-1}}+(1-q)\|\mathcal{L}_{m_n}z^{(m_n,\dl)}-F^{(m_n)}_\dl\|_{W^{m_n}},\
G_{0,m_0}=0, \ee $\|\cdot\|_{W^m}$ is defined in \eqref{ipWm},
\be\label{zm}z^{(m,\dl)}:=\sum_{j=0}^{m} c_j^{(m,\dl)}
w_j^{(m)}e^{-p_jt}, \ee $w_j^{(m)}$ and $p_j$ are defined in
\eqref{wj} and \eqref{pj}, respectively, and $c_j^{(m,\dl)}$ are
obtained by solving linear algebraic system \eqref{LASdel}.

We observe that \be\begin{split}
\mathcal{L}_{m_n}z^{(m_n,\dl)}-F^{(m_n)}_\dl&=Q^{(m_n)}c^{(m_n,\dl)}-F^{(m_n)}_\dl\\
&=Q^{(m_n)}(a_nI+Q^{(m_n)})^{-1}F^{(m_n)}_\dl-F^{(m_n)}_\dl\\
&=(Q^{(m_n)}+a_nI-a_nI)(a_nI+Q^{(m_n)})^{-1}F^{(m_n)}_\dl-F^{(m_n)}_\dl\\
&=-a_n(a_nI+Q^{(m_n)})^{-1}F^{(m_n)}_\dl=-a_nc^{(m_n,\dl)}.
\end{split}\ee Thus, the sequence \eqref{Gn} can be written in the
following form \be\label{itG}
G_{n,m_n}=qG_{n-1,m_{n-1}}+(1-q)a_n\|c^{(m_n,\dl)}\|_{W^{m_n}},\
G_{0,m_0}=0, \ee  where $\|\cdot\|_{W^m}$ is defined in
\eqref{ipWm}, and $c^{(m,\dl)}$ solves the linear algebraic systems
\eqref{LASdel}.

It follows from estimates \eqref{es4}, \eqref{eQam} and
\eqref{eaQam} that \be\begin{split}\label{acm}
a_n\|c^{(m_n,\dl)}\|_{W^{m_n}}&=a_n\|(a_nI+Q^{(m_n)})^{-1}F_\dl^{(m_n)}\|_{W^{m_n}}\\
&\leq a_n\|(a_nI+Q^{(m_n)})^{-1}(F_\dl^{(m_n)}-F^{(m_n)})\|_{W^{m_n}}\\
&+a_n\|(a_nI+Q^{(m_n)})^{-1}F^{(m_n)}\|_{W^{m_n}}\\
&\leq \|F_\dl^{(m_n)}-F^{(m_n)}\|_{W^{m_n}}\\
&+a_n\|(a_nI+Q^{(m_n)})^{-1}\mathcal{L}_{m_n}f\|_{W^{m_n}}\\
&\leq \dl\sqrt{d}+\sqrt{a_n}\|f\|_{X_{0,b}}.
\end{split}\ee This together with \eqref{itG}
yield \be G_{n,m_n}\leq
qG_{n-1,m_{n-1}}+(1-q)\left(\dl\sqrt{d}+\sqrt{a_n}\|f\|_{X_{0,b}}\right),
\ee or \be\label{es9} G_{n,m_n}-\dl\sqrt{d}\leq
q(G_{n-1,m_{n-1}}-\dl\sqrt{d})+ (1-q)\sqrt{a_n}\|f\|_{X_{0,b}}.\ee

\begin{lem}\label{lem8} The sequence \eqref{itG} satisfies the
following estimate: \be\label{es10} G_{n,m_n}-\dl\sqrt{d}\leq
\frac{(1-q)\sqrt{a_n}\|f\|_{X_{0,b}}}{1-\sqrt{q}},\ee where $a_n$
are defined in \eqref{san}.
\end{lem}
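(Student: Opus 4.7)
The plan is to treat \eqref{es9} as a linear recursion in the shifted quantity $H_n := G_{n,m_n} - \delta\sqrt{d}$ and iterate it down to $n=0$. Since $G_{0,m_0} = 0$, the initial term $H_0 = -\delta\sqrt{d} \le 0$, which is the key observation that lets the noise contribution drop out of the final bound.

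First I would rewrite \eqref{es9} as $H_n \le q H_{n-1} + (1-q)\sqrt{a_n}\,\|f\|_{X_{0,b}}$ and prove by induction (or direct unrolling) that
\[
H_n \;\le\; q^n H_0 + (1-q)\,\|f\|_{X_{0,b}}\sum_{j=1}^{n} q^{\,n-j}\sqrt{a_j}.
\]
Then, using $H_0 \le 0$ to discard the first term, I would be left with estimating the sum.

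Next I would evaluate the geometric sum. Because $a_j = q^j a_0$ by \eqref{san}, one has $\sqrt{a_j} = q^{j/2}\sqrt{a_0}$, so
\[
\sum_{j=1}^{n} q^{\,n-j}\sqrt{a_j} \;=\; \sqrt{a_0}\sum_{j=1}^{n} q^{\,n-j/2} \;=\; \sqrt{a_0}\, q^{n/2}\sum_{k=0}^{n-1}(\sqrt{q})^{k} \;=\; \sqrt{a_n}\cdot\frac{1-q^{n/2}}{1-\sqrt{q}}.
\]
Bounding $1-q^{n/2} \le 1$ yields exactly $\frac{(1-q)\sqrt{a_n}\,\|f\|_{X_{0,b}}}{1-\sqrt{q}}$, which is \eqref{es10}.

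There is no real obstacle here; the main point is simply to keep track of the half-powers of $q$ in the geometric sum and to notice that $H_0 \le 0$, so the $q^n H_0$ term can be dropped rather than estimated. The coefficient $1/(1-\sqrt{q})$ arises naturally from summing $\sqrt{q}^{\,k}$ rather than $q^{k}$, which is what converts the one-step recursion with growth rate $q$ into a bound governed by $\sqrt{a_n}$.
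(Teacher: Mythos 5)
Your argument is correct and rests on the same two observations as the paper's proof — shifting to $H_n:=G_{n,m_n}-\dl\sqrt{d}$ and using $H_0=-\dl\sqrt{d}\le 0$ to drop the initial term — but the execution differs. The paper proves \eqref{es10} by induction: it writes \eqref{es9} as $\Psi_n\le q\Psi_{n-1}+\sqrt{q}\,\psi_{n-1}$ with $\psi_n:=(1-q)\sqrt{a_n}\|f\|_{X_{0,b}}$ (using $a_n=qa_{n-1}$ to shift the index on the forcing term) and verifies the bound $\Psi_{k+1}\le\psi_{k+1}/(1-\sqrt{q})$ step by step via the identity $\tfrac{q}{1-\sqrt{q}}+\sqrt{q}=\tfrac{\sqrt{q}}{1-\sqrt{q}}$ and $\sqrt{a_k}/\sqrt{a_{k+1}}=1/\sqrt{q}$. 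You instead unroll the recursion to $H_n\le q^nH_0+(1-q)\|f\|_{X_{0,b}}\sum_{j=1}^{n}q^{\,n-j}\sqrt{a_j}$ and evaluate the sum in closed form using $\sqrt{a_j}=q^{j/2}\sqrt{a_0}$; your reindexing is correct and yields $\sqrt{a_n}\,(1-q^{n/2})/(1-\sqrt{q})$, a slightly sharper intermediate bound than \eqref{es10} before you discard the factor $1-q^{n/2}$. The two routes are equivalent in substance; yours makes the origin of the constant $1/(1-\sqrt{q})$ transparent as the sum $\sum_k(\sqrt{q})^k$, while the paper's induction never writes the geometric sum explicitly.
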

\begin{proof}
Define \be \Psi_n:=G_{n,m_n}-\dl\sqrt{d} \ee and \be
\psi_n:=(1-q)\sqrt{a_n}\|f\|_{X_{0,b}}. \ee Then estimate
\eqref{es9} can be rewritten as \be \Psi_n\leq
q\Psi_{n-1}+\sqrt{q}\psi_{n-1}, \ee where the relation
$a_n=qa_{n-1}$ was used. Let us prove estimate \eqref{es10} by
induction. For $n=0$ we get \be \Psi_0=-\dl \sqrt{d}\leq
\frac{(1-q)\sqrt{a_0}\|f\|_{X_{0,b}}}{1-\sqrt{q}}. \ee Suppose
estimate \eqref{es10} is true for $0\leq n\leq k$. Then
\be\begin{split} \Psi_{k+1}&\leq q\Psi_{k}+\sqrt{q}\psi_k\leq
\frac{q}{1-\sqrt{q}}\psi_k+\sqrt{q}\psi_k
\\
&=\frac{\sqrt{q}}{1-\sqrt{q}}\psi_k=\frac{\sqrt{q}}{1-\sqrt{q}}\frac{\psi_k}{\psi_{k+1}}\psi_{k+1}\\
&=\frac{\sqrt{q}}{1-\sqrt{q}}\frac{\sqrt{a_k}}{\sqrt{a_{k+1}}}\psi_{k+1}=\frac{1}{1-\sqrt{q}}\psi_{k+1},\end{split}\ee
where the relation $a_{k+1}=qa_k$ was used.\\
\lemref{lem8} is proved.
\end{proof}
\begin{lem}\label{lem9}
Suppose \be\label{con1}G_{1,m_1}>\dl\sqrt{d},\ee where $G_{n,m_n}$
are defined in \eqref{itG}. Then there exist a unique integer
$n_\dl$, satisfying the stopping rule \eqref{dp} with $C>\sqrt{d}$.
\end{lem}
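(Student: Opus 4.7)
The plan is to reduce the statement to a least-element argument: I would define $S := \{n \geq 1 : G_{n,m_n} \leq C\delta^\varepsilon\}$, show $S$ is nonempty, and set $n_\delta := \min S$. Uniqueness and the strict inequality $C\delta^\varepsilon < G_{n,m_n}$ for $1 \leq n < n_\delta$ then follow automatically from the minimality of $n_\delta$ in $S$.

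The only substantive step is nonemptiness of $S$. For this I would invoke \lemref{lem8}, which already supplies $G_{n,m_n} \leq \delta\sqrt{d} + \frac{(1-q)\sqrt{a_n}\|f\|_{X_{0,b}}}{1-\sqrt{q}}$. Since $a_n = q^n a_0$ with $q \in (0,1)$, the second summand decays to zero as $n \to \infty$, so $\limsup_{n \to \infty} G_{n,m_n} \leq \delta\sqrt{d}$. I would then compare $\delta\sqrt{d}$ with the threshold $C\delta^\varepsilon$: for $\delta \leq 1$ one has $\delta^\varepsilon \geq \delta$ because $\varepsilon \in (0,1)$, and combining with the hypothesis $C > \sqrt{d}$ gives $C\delta^\varepsilon \geq C\delta > \sqrt{d}\,\delta$. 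Hence for all sufficiently large $n$ the bound from \lemref{lem8} falls strictly below $C\delta^\varepsilon$, and $S$ is nonempty.

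With $S$ nonempty, $n_\delta := \min S$ is a well-defined positive integer with $G_{n_\delta,m_{n_\delta}} \leq C\delta^\varepsilon$, and by minimality every $n$ with $1 \leq n < n_\delta$ satisfies $G_{n,m_n} > C\delta^\varepsilon$; this is precisely \eqref{dp}, and uniqueness is immediate from the definition. The role of the hypothesis $G_{1,m_1} > \delta\sqrt{d}$ is to guarantee that the stopping is non-trivial, i.e., that $n_\delta$ is produced by a genuine iteration rather than forced at $n=1$ by a vanishingly small initial residual; it does not enter the existence argument itself. I do not foresee a real obstacle here: the main technical work has already been absorbed into \lemref{lem8}, and what remains is the routine combination of $a_n \to 0$ with the scaling inequality $C\delta^\varepsilon > \delta\sqrt{d}$.
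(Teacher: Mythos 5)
Your proof is correct and follows essentially the same route as the paper: both rest on the estimate from \lemref{lem8}, which together with $a_n\to 0$ gives $\limsup_{n\to\infty}G_{n,m_n}\le \delta\sqrt{d}<C\delta^{\varepsilon}$, so the threshold is eventually undershot and $n_\delta$ is taken to be the first such index, with uniqueness immediate from that definition. If anything, you are slightly more explicit than the paper in justifying $C\delta^{\varepsilon}>\delta\sqrt{d}$ (via $\delta\le 1$, $\varepsilon\in(0,1)$, $C>\sqrt{d}$) and in noting that hypothesis \eqref{con1} does not actually enter the existence argument.
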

\begin{proof}
From \lemref{lem8} we get the estimate \be\label{es11} G_{n,m_n}\leq
\dl\sqrt{d}+ \frac{(1-q)\sqrt{a_n}\|f\|_{X_{0,b}}}{1-\sqrt{q}}, \ee
where $a_n$ are defined in \eqref{san}. Therefore, \be \limsup_{n\to
\infty} G_{n,m_n}\leq \dl \sqrt{d}, \ee where the relation
$\lim_{n\to \infty} a_n=0$ was used. This together with condition
\eqref{con1} yield the existence of the integer $n_\dl$. The
uniqueness of the integer $n_\dl$ follows from its definition.\\
\lemref{lem9} is proved.
\end{proof}
\begin{lem}\label{lem10}
Suppose conditions of \lemref{lem9} hold and $n_\dl$ is chosen by
the rule \eqref{dp}. Then \be \lim_{\dl\to
0}\frac{\dl}{\sqrt{a_{n_\dl}}}=0. \ee
\end{lem}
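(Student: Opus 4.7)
The plan is to exploit the strict inequality on the left side of the stopping rule \eqref{dp}, namely $C\delta^\varepsilon < G_{n_\delta-1,m_{n_\delta-1}}$, and combine it with the upper bound on $G_{n,m_n}$ provided by \lemref{lem8}.

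First, I would apply \lemref{lem8} at index $n=n_\delta-1$ (assuming $n_\delta\ge 2$; the case $n_\delta=1$ needs only a trivial separate remark using condition \eqref{con1}) to obtain
\be\label{tmpL10a}
C\delta^\varepsilon < G_{n_\delta-1,m_{n_\delta-1}}
\leq \delta\sqrt{d}+\frac{(1-q)\sqrt{a_{n_\delta-1}}\,\|f\|_{X_{0,b}}}{1-\sqrt{q}}.
\ee
Rearranging \eqref{tmpL10a} gives a lower bound
\be\label{tmpL10b}
\sqrt{a_{n_\delta-1}}>\frac{(1-\sqrt{q})\bigl(C\delta^\varepsilon-\sqrt{d}\,\delta\bigr)}{(1-q)\,\|f\|_{X_{0,b}}}
=\frac{\delta^\varepsilon(1-\sqrt{q})\bigl(C-\sqrt{d}\,\delta^{1-\varepsilon}\bigr)}{(1-q)\,\|f\|_{X_{0,b}}}.
\ee
The next step is to replace $a_{n_\delta-1}$ by $a_{n_\delta}$ using the recurrence \eqref{san}, which yields $\sqrt{a_{n_\delta}}=\sqrt{q}\,\sqrt{a_{n_\delta-1}}$, and therefore
\be\label{tmpL10c}
\frac{\delta}{\sqrt{a_{n_\delta}}}<\frac{(1-q)\,\|f\|_{X_{0,b}}}{\sqrt{q}\,(1-\sqrt{q})\bigl(C-\sqrt{d}\,\delta^{1-\varepsilon}\bigr)}\,\delta^{1-\varepsilon}.
\ee

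Finally, letting $\delta\to 0$, the factor $\delta^{1-\varepsilon}\to 0$ because $\varepsilon\in(0,1)$, and the denominator $C-\sqrt{d}\,\delta^{1-\varepsilon}$ converges to $C-0=C>\sqrt{d}>0$ (using the assumption $C>\sqrt{d}$ from \eqref{dp}); in particular, the right-hand side of \eqref{tmpL10c} tends to $0$, which is the desired conclusion. I expect no real obstacle here: the main structural ingredient is already prepared by \lemref{lem8}, and the condition $\varepsilon<1$ (rather than $\varepsilon\le 1$) is precisely what makes the ratio $\delta^{1-\varepsilon}$ vanish. The only small subtlety is the algebraic rearrangement in \eqref{tmpL10a}–\eqref{tmpL10b} and the need to check that for sufficiently small $\delta$ the bracket $C-\sqrt{d}\,\delta^{1-\varepsilon}$ is positive, so that the inequality in \eqref{tmpL10b} is non-trivial; this is immediate from $C>\sqrt{d}$.
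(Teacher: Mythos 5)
Your proposal is correct and follows essentially the same route as the paper's own proof: both combine the left inequality of the stopping rule \eqref{dp} at index $n_\dl-1$ with the upper bound \eqref{es11} from \lemref{lem8}, rearrange to bound $\dl/\sqrt{a_{n_\dl-1}}$, pass to $a_{n_\dl}=qa_{n_\dl-1}$, and let $\dl\to 0$ using $\varepsilon\in(0,1)$ and $C>\sqrt{d}$. Your explicit remark about the $n_\dl=1$ edge case and the positivity of $C-\sqrt{d}\,\dl^{1-\varepsilon}$ is a minor tidying of details the paper leaves implicit.
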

\begin{proof}
From the stopping rule \eqref{dp} and estimate \eqref{es11} we get
\be C\dl^\varepsilon\leq G_{n_\dl-1,m_{n_\dl-1}}\leq \dl \sqrt{d}+
\frac{(1-q)\sqrt{a_{n_\dl-1}}\|f\|_{X_{0,b}}}{1-\sqrt{q}}, \ee where
$C>\sqrt{d},$ $\varepsilon\in(0,1)$. This implies \be
\frac{\dl(C\dl^{\varepsilon-1}-\sqrt{d})}{\sqrt{a_{n_\dl-1}}}\leq
\frac{(1-q)\|f\|_{X_{0,b}}}{1-\sqrt{q}}, \ee so, for
$\varepsilon\in(0,1)$, and $a_{n_\dl}=qa_{n_\dl-1}$, one gets\be
\lim_{\dl\to 0}\frac{\dl}{\sqrt{a_{n_\dl}}}=\lim_{\dl\to
0}\frac{\dl}{\sqrt{q}\sqrt{a_{n_\dl-1}}}\leq \lim_{\dl\to 0}
\frac{(1-q)\dl^{1-\varepsilon}\|f\|_{X_{0,b}}}{(\sqrt{q}-q)(C-\dl^{1-\varepsilon}\sqrt{d})}=0.
\ee \lemref{lem10} is proved.
\end{proof}

\begin{lem}\label{lem11}
Consider the stopping rule \eqref{dp}, where the parameters $m_n$
are chosen by rule \eqref{rulm}. If $n_\dl$ is chosen by the rule
\eqref{dp} then \be \lim_{\dl\to 0}n_\dl=\infty.\ee
\end{lem}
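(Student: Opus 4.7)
The plan is to prove \lemref{lem11} by contradiction, exploiting continuity of the sequence $\{G_{n,m_n}^\dl\}$ in the noisy data together with the standing hypothesis \eqref{con1} of \lemref{lem9}.

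First I would unroll the recurrence \eqref{itG} into the closed form
\bee
G_{n,m_n}^\dl=(1-q)\sum_{j=1}^{n}q^{n-j}\,a_j\,\|c^{(m_j,\dl)}\|_{W^{m_j}},
\eee
in which every summand is nonnegative and, for each fixed $j$, depends linearly (hence continuously) on the data $F_\dl^{(m_j)}$ through the fixed resolvent $(a_j I+Q^{(m_j)})^{-1}$. Suppose, for contradiction, that $n_\dl$ stays bounded along some sequence $\dl_k\to 0$; after passing to a further subsequence I may assume $n_{\dl_k}=n^\ast$ is constant. The stopping rule \eqref{dp} gives $G_{n^\ast,m_{n^\ast}}^{\dl_k}\leq C\dl_k^\varepsilon\to 0$, and since $\|F_{\dl_k}^{(m_j)}-F^{(m_j)}\|_{W^{m_j}}\leq \sqrt{d}\,\dl_k\to 0$ by \eqref{es4}, continuity of the closed form yields $G_{n^\ast,m_{n^\ast}}^{0}=0$, where the superscript $0$ denotes the quantity computed from the exact data. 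Nonnegativity of each summand then forces $c^{(m_j,0)}=0$ for all $1\leq j\leq n^\ast$, and in particular $F^{(m_1)}=(a_1I+Q^{(m_1)})c^{(m_1,0)}=0$.

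With $F^{(m_1)}=0$ in hand, the vector $c^{(m_1,\dl_k)}=(a_1I+Q^{(m_1)})^{-1}(F_{\dl_k}^{(m_1)}-F^{(m_1)})$ is produced entirely by noise, so \eqref{eaQam} combined with \eqref{es4} gives $a_1\|c^{(m_1,\dl_k)}\|_{W^{m_1}}\leq \sqrt{d}\,\dl_k$. Therefore
\bee
G_{1,m_1}^{\dl_k}=(1-q)\,a_1\|c^{(m_1,\dl_k)}\|_{W^{m_1}}\leq (1-q)\sqrt{d}\,\dl_k<\sqrt{d}\,\dl_k,
\eee
which directly contradicts assumption \eqref{con1}, since $q\in(0,1)$. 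Hence the boundedness of $n_\dl$ cannot hold, and $\lim_{\dl\to 0}n_\dl=\infty$.

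The main obstacle I anticipate is the continuity step: in general one might worry about uniformity of the limit across the iteration index, but that difficulty is bypassed by first extracting a constant subsequence $n_{\dl_k}=n^\ast$, which reduces the question to the continuity of a fixed finite linear combination of data-dependent resolvent expressions. Once this reduction is made, the rest is pure bookkeeping using the estimates already established in \lemref{aBm} and the noise bound \eqref{es4}.
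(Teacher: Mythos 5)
Your proof is correct, but it takes a genuinely different route from the paper's. The paper argues on the quantity $a_{n_\dl}\|c^{(m_{n_\dl},\dl)}\|_{W^{m_{n_\dl}}}$, which the stopping rule forces to $0$: assuming $a_{n_\dl}\not\to 0$, it extracts a subsequence with $a_{n_{\dl_j}}\geq c_1>0$ (so that $m_{n_{\dl_j}}$ is bounded by rule \eqref{rulm}), and then uses the spectral resolution of $Q^{(m)}$ to show that $a^2\|Q_{m,a}^{-1}F_\dl^{(m)}\|^2_{W^m}$ stays bounded away from $0$ for $a\geq c_1$ and small $\dl$ — a contradiction. That argument needs the implicit nondegeneracy hypothesis $F^{(m)}\neq 0$ for the finitely many values of $m$ that occur along the subsequence. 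You instead freeze the stopping index at a constant value $n^\ast$ along a subsequence, exploit the continuity of the finite closed-form sum for $G_{n^\ast,m_{n^\ast}}$ in the data (each $m_j$ being fixed once $j$ is fixed, by \eqref{rulm}) to conclude $G^{0}_{n^\ast,m_{n^\ast}}=0$, hence $c^{(m_1,0)}=0$ and $F^{(m_1)}=0$, and then contradict \eqref{con1} via \eqref{eaQam} and \eqref{es4}. Your version is more elementary — no spectral theorem — and it makes transparent exactly which hypothesis rules out the degenerate case: you need $G_{1,m_1}>\dl\sqrt{d}$, which is not stated in the lemma itself but is the hypothesis of \lemref{lem9} (and is implied by the condition $G_{1,m_1}>C\dl$ of \thmref{thm3}), and which is in any case what guarantees that $n_\dl$ exists; the paper's version trades that for the assumption $F^{(m)}\neq 0$. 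Both reductions handle the $n$-dependence of $m_n$ in essentially the same way, by confining attention to finitely many values of $m$.
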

\begin{proof}
From the stopping rule \eqref{dp} with the sequence $G_n$ defined in
\eqref{itG} one gets \be\begin{split}
&qC\dl^\varepsilon+(1-q)a_{n_\dl}\|c^{(m_{n_\dl},\dl)}\|_{W^{m_{n_\dl}}}\leq
qG_{n_\dl-1,m_{n_\dl-1}}\\
&+(1-q)a_{n_\dl}\|c^{(m_{n_\dl},\dl)}\|_{W^{m_{n_\dl}}}=G_{n_\dl,m_{n_\dl}}<C\dl^\varepsilon,
\end{split}\ee where $c^{(m,\dl)}$ is obtained by solving linear algebraic system \eqref{LASdel}. This implies \be
0<a_{n_\dl}\|c^{(m_{n_\dl},\dl)}\|_{W^{m_{n_\dl}}}\leq
C\dl^\varepsilon. \ee Thus, \be\label{rel6} \lim_{\dl\to
0}a_{n_\dl}\|c^{(m_{n_\dl},\dl)}\|_{W^{m_{n_\dl}}}=0. \ee
 If $F^{(m)}\neq 0$, then there exists a $\lambda_0^{(m)}>0$ such
that \be\label{Flo} E^{(m)}_{\lambda_0^{(m)}}F^{(m)}\neq 0,\quad
\langle
E^{(m)}_{\lambda_0}F^{(m)},F^{(m)}\rangle_{W^{m}}:=\xi^{(m)}>0, \ee
where $E^{(m)}_s$ is the resolution of the identity corresponding to
the operator $Q^{(m)}:=\mathcal{L}_m\mathcal{L}_m^*$. Let
$$h_m(\dl,\alpha):=\alpha^2\|Q_{m,\alpha}^{-1}F_\dl^{(m)}\|_{W^{m}}^2,\quad Q_{m,a}:=aI+Q^{(m)}.$$ For a
fixed number $a>0$ we obtain \be\begin{split}\label{hdl}
h_{m}(\dl,a)&=a^2\|Q_{m,a}^{-1}F_\dl^{(m)}\|_{W^{m}}^2\\
&=\int_0^\infty \frac{a^2}{(a+s)^2}d\langle
E^{(m)}_sF_\dl^{(m)},F_\dl^{(m)}\rangle_{W^{m}}\\
&\geq \int_0^{\lambda_0^{(m)}} \frac{a^2}{(a+s)^2}d\langle
E^{(m)}_sF_\dl^{(m)},F_\dl^{(m)}\rangle_{W^{m}}\\
&\geq\frac{a^2}{(a+\lambda_0)^2}\int_0^{\lambda_0^{(m)}} d\langle
E^{(m)}_sF_\dl^{(m)},F_\dl^{(m)}\rangle_{W^{m}}\\
&=\frac{a^2\|E^{(m)}_{\lambda_0^{(m)}}F_\dl^{(m)}\|_{W^{m}}^2}{(a+\lambda_0^{(m)})^2}.
\end{split}\ee
Since $E^{(m)}_{\lambda_0}$ is a continuous operator, and
$\|F^{(m)}-F^{(m)}_\dl\|_{W^m}<\sqrt{d}\dl$, it follows from
\eqref{Flo} that \be \lim_{\dl\to 0}\langle
E^{(m)}_{\lambda_0}F_\dl^{(m)},F_\dl^{(m)}\rangle_{W^m}=\langle
E^{(m)}_{\lambda_0}F^{(m)},F^{(m)}\rangle_{W^m}>0 .\ee Therefore,
for the fixed number $a>0$ we get \be\label{hc1} h_m(\dl,a)\geq
c_2>0\ee for all sufficiently small $\dl>0$, where $c_2$ is a
constant which does not depend on $\dl$. Suppose $\lim_{\dl\to
0}a_{n_\dl}\neq 0.$ Then there exists a subsequence $\dl_j\to 0$ as
$j\to \infty$, such that \be a_{n_{\dl_j}}\geq c_1>0, \ee and\be
0<m_{n_{\dl_j}}=\left\lceil
[\kappa(a_0/a_{n_{\dl_j}})^{1/4}]\right\rceil\leq \left\lceil
[\kappa(a_0/c_1)^{1/4}]\right\rceil:=c_3<\infty,\quad
\kappa,a_0>0,\ee where the rule \eqref{rulm} was used to obtain the
parameters $m_{n_{\dl_j}}$. This together with \eqref{Flo} and
\eqref{hc1} yield \be\begin{split} \lim_{j\to
\infty}h_{m_{n_{\dl_j}}}(\dl_j,a_{n_{\dl_j}})&\geq \lim_{j\to
\infty}
\frac{a_{n_{\dl_j}}^2\|E^{(m_{n_{\dl_j}})}_{\lambda_0^{(m_{n_{\dl_j}})}}F_{\dl_j}^{(m_{n_{\dl_j}})}\|_{W^{m_{n_{\dl_j}}}}^2}{(a_{n_{\dl_j}}+\lambda_0^{(m_{n_{\dl_j}})})^2}\\
& \geq \liminf_{j\to
\infty}\frac{c_1^2\|E^{(m_{n_{\dl_j}})}_{\lambda_0^{(m_{n_{\dl_j}})}}F^{(m_{n_{\dl_j}})}\|_{W^{m_{n_{\dl_j}}}}^2}{(c_1+\lambda_0^{(m_{n_{\dl_j}})})^2}>0.\end{split}\ee
This contradicts relation \eqref{rel6}. Thus, $\lim_{\dl\to0}
a_{n_\dl}=\lim_{\dl\to 0} a_0q^{n_\dl}=0,$ i.e., $\lim_{\dl\to
0}n_\dl=\infty.$\\ \lemref{lem11} is proved.
\end{proof}

It follows from \lemref{lem10} and \lemref{lem11} that the stopping
rule \eqref{dp} yields the relations \eqref{rel4}. We have proved
the following theorem:

\begin{thm}\label{thm3}
Suppose all the assumptions of \thmref{thm2} hold, $m_n$ are chosen
by the rule \eqref{rulm}, $n_\dl$ is chosen by the rule \eqref{dp}
and $G_{1,m_1}>C\dl,$ where $G_{n,m_n}$ are defined in \eqref{itG},
then \be\label{rel7} \lim_{\dl\to 0}\|f-u_{n_\dl,m_{n_\dl}}^\dl\|=0.
\ee
\end{thm}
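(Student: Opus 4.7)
The plan is to assemble this conclusion as an immediate synthesis of \lemref{lem9}, \lemref{lem10}, and \lemref{lem11}, applied on top of \thmref{thm2}. All of the real work has been done earlier; the theorem just certifies that the proposed stopping rule \eqref{dp} verifies the abstract hypotheses \eqref{rel4} of \thmref{thm2}.

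First I would check that the stopping index $n_\dl$ is well defined. The hypothesis $G_{1,m_1} > C\dl$ with $C > \sqrt{d}$ and $\varepsilon \in (0,1)$ implies, for all sufficiently small $\dl > 0$, that $G_{1,m_1} > \dl\sqrt{d}$, which is exactly condition \eqref{con1}. \lemref{lem9} then gives existence and uniqueness of an integer $n_\dl$ satisfying the stopping rule \eqref{dp}. This is the first routine step.

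Next I would verify the two conditions in \eqref{rel4}. \lemref{lem11} delivers $\lim_{\dl\to 0} n_\dl = \infty$ (using the rule \eqref{rulm} for $m_n$, which is part of the standing hypothesis), which is the first half of \eqref{rel4}. \lemref{lem10} delivers $\lim_{\dl\to 0}\dl/\sqrt{a_{n_\dl}} = 0$, which is the second half. Both lemmas are invoked with exactly the data already assumed in the statement of the theorem, so there is nothing to adjust.

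With \eqref{rel4} in hand, \thmref{thm2} applies directly to the sequence $u_{n_\dl,m_{n_\dl}}^\dl$ defined by \eqref{it3}, yielding the convergence \eqref{rel7}. There is no genuine obstacle here: the content of the theorem is the claim that \eqref{dp} is an admissible adaptive stopping rule, and this has already been extracted in the form of \lemref{lem10} and \lemref{lem11}. The only point worth stating carefully in the proof is the compatibility between the hypothesis $G_{1,m_1} > C\dl$ of \thmref{thm3} and the hypothesis $G_{1,m_1} > \dl\sqrt{d}$ of \lemref{lem9}, which is handled by the choice $C > \sqrt{d}$.
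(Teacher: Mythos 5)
Your proposal is correct and follows essentially the same route as the paper: the paper's proof of \thmref{thm3} consists precisely of the remark that \lemref{lem10} and \lemref{lem11} show the stopping rule \eqref{dp} yields relations \eqref{rel4}, after which \thmref{thm2} gives \eqref{rel7}. Your additional observation that $G_{1,m_1}>C\dl$ with $C>\sqrt{d}$ implies condition \eqref{con1} of \lemref{lem9} (in fact immediately, for all $\dl>0$, not only small $\dl$) is a harmless and correct tidying of a point the paper leaves implicit.
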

\subsection{The algorithm}
Let us formulate the algorithm for obtaining the approximate
solution $f_m^\dl$:
\begin{itemize}
\item[(1)] The data $F_\dl(p)$ on the interval $[0,d]$, $d>0$, the support of the function $f(t)$, and the noise level $\dl$;
\item[(2)] initialization : choose the parameters $\kappa>0$, $a_0>0$, $q\in(0,1)$, $\varepsilon\in(0,1)$, $C>\sqrt{d}$, and set $u_{0,m_0}^\dl=0$, $G_0=0$, $n=1$;
\item[(3)] iterate, starting with $n=1$, and stop when condition
\eqref{stopit} ( see below) holds,
\begin{itemize}
\item[(a)]$ a_n=a_0q^n$,
\item[(b)]choose $m_n$ by the rule \eqref{rulm},
\item[(c)] construct the vector $F^{(m_n)}_\dl$:
\be (F^{(m_n)}_\dl)_l=F_\dl(p_l),\quad p_l=lh,\ h=d/m_n,\
l=0,1,\hdots,m, \ee
\item[(d)] construct the matrices $H_{m_n}$ and $D_{m_n}$:
\be
(H_{m_n})_{ij}:=\int_0^be^{-(p_i+p_j)t}dt=\frac{1-e^{-b(p_i+p_j)}}{p_i+p_j},\quad
i,j=1,2,3,\hdots,m_n\ee\be (D_{m_n})_{ij}=\left\{
                 \begin{array}{ll}
                   w_i^{(m_n)}, & \hbox{$i=j$;} \\
                   0, & \hbox{otherwise,}
                 \end{array}
               \right. \ee where $w_j^{(m)}$ are defined in
               \eqref{wj},
\item[(e)] solve the following linear algebraic
systems:\be\label{LAS1}(a_nI+H_{m_n}D_{m_n})c^{(m_n,\dl)}=F^{(m_n)}_\dl,\ee
where $(c^{(m_n,\dl)})_i=c_i^{(m_n,\dl)}$,
\item[(f)]update the coefficient $c_j^{(m_n,\dl)}$ of the approximate solution $u_{n,m_n}^\dl(t)$ defined in \eqref{fmdel} by the
iterative formula:
\be\label{uap}u_{n,m_n}^\dl(t)=qu_{n-1,m_{n-1}}^\dl(t)+(1-q)\sum_{j=1}^{m_n}
c^{(m_n,\dl)}w_j^{(m_n)}e^{-p_jt} ,\ee where \be
u_{0,m_0}^\dl(t)=0. \ee
\end{itemize}
Stop when for the first time the inequality
\be\label{stopit}G_{n,m_n}=qG_{n-1,m_{n-1}}+a_n\|c^{(m_n,\dl)}\|_{W^{m_n}}\leq
C\dl^\varepsilon\ee holds, and get the approximation
$f^\dl(t)=u_{n_\dl,m_{n_\dl}}^\dl(t)$ of the function $f(t)$ by
formula \eqref{uap}.
\end{itemize}

\section{Numerical experiments}

\subsection{The parameters $\kappa$, $a_0$, $d$}
From definition \eqref{san} and the rule \eqref{rulm} we conclude
that $m_{n}\to \infty$ as $a_n\to 0.$ Therefore, one needs to
control the value of the parameter $m_{n}$ so that it will not grow
too fast as $a_n$ decreases. The role of the parameter $\kappa$ in
\eqref{rulm} is to control the value of the parameter $m_{n}$ so
that the value of the parameter $m_n$ will not be too large. Since
for sufficiently small noise level $\dl$, namely
$\dl\in(10^{-16},10^{-6}]$, the regularization parameter
$a_{n_\dl}$, obtained by the stopping rule \eqref{dp}, is at most
$O(10^{-9})$, we suggest to choose $\kappa$ in the interval $(0,1]$.
For the noise level $\dl\in(10^{-6},10^{-2}]$ one can choose
$\kappa\in(1,3]$. To reduce the number of iterations we suggest to
choose the geometric sequence $a_n=a_0\dl^{\alpha n}$, where
$a_0\in[0.1,0.2]$ and $\alpha\in[0.5,0.9].$ One may assume without
loss of generality that $b=1$, because a scaling transformation
reduces the integral over $(0,b)$ to the integral over $(0,1)$. We
have assumed that the data $F(p)$ are defined on the interval
$J:=[0, d]$. In the case the interval $J=[d_1,d]$, $0<d_1<d$, the
constant $d$ in estimates \eqref{es2}, \eqref{es4}, \eqref{es6},
\eqref{es7}, \eqref{es9}, \eqref{es10}, and \eqref{es11} are
replaced with the constant $d-d_1$. If $b=1$, i.e., $f(t)=0$ for
$t>1$, then one has to take $d$ not too large. Indeed, if $f(t)=0$
for $t>1$, then an integration by parts yields:
$F(p)=[f(0)-e^{-p}f(1)]/p +O(1/p^2),$ $p\to \infty.$ If the data are
noisy, and the noise level is $\delta$, then the data becomes
indistinguishable from noise for $p=O(1/\delta)$. Therefore it is
useless to keep the data $F_\dl(p)$ for $d>O(1/\delta)$. In practice
one may get a satisfactory accuracy of inversion by the method,
proposed in this paper, when one uses the data with $d\in [1,20]$
when $\delta\leq 10^{-2}$. In all the numerical examples we have
used $d=5$. Given the interval $[0,d]$, the proposed method
generates automatically the discrete data $F_\dl(p_j)$,
$j=0,1,2,\hdots,m$, over the interval $[0,d]$ which are needed to
get the approximation of the function $f(t)$.

\subsection{Experiments}
To test the proposed method we consider some examples proposed in
\cite{AYAGR00}, \cite{KSC76}, \cite{SDMR05}, \cite{LDAM02},
\cite{BDBM79}, \cite{HDJA68}, \cite{FMSS}, \cite{VVK06},
\cite{MCDG07} and \cite{JWEP78}. To illustrate the numerical
stability of the proposed method with respect to the noise, we use
the noisy data $F_\dl(p)$ with various noise levels $\dl=10^{-2},$
$\dl=10^{-4}$ and $\dl=10^{-6}$. The random quantities $\dl_j$ in
\eqref{as1} are obtained from the uniform probability density
function over the interval $[-\dl,\dl]$.
 In examples 1-12 we choose the
value of the parameters as follows: $a_n=0.1q^n$, $q=\dl^{1/2}$ and
$d=5$. The parameter $\kappa=1$ is used for the noise levels
$\dl=10^{-2}$ and $\dl=10^{-4}$. When $\dl=10^{-6}$ we choose
$\kappa=0.3$ so that the value of the parameters $m_n$ are not very
large, namely $m_n\leq 300$. Therefore, the computation time for
solving linear algebraic system \eqref{LAS1} can be reduced
significantly. We assume that the support of the function $f(t)$ is
in the interval $[0,b]$ with $b=10$. In the stopping rule \eqref{dp}
the following parameters are used: $C=\sqrt{d}+0.01$,
$\varepsilon=0.99$. In example 13 the function $f(t)=e^{-t}$ is used
to test the applicability of the proposed method to functions
without compact support. The results are given in Table 13 and
Figure 13.

For a comparison with the exact solutions we use the mean absolute
error: \be
MAE:=\left[\frac{\sum_{j=1}^{100}(f(t_i)-f^\dl_{m_{n_\dl}}(t_i))^2}{100}\right]^{1/2},\
t_j=0.01+0.1(j-1),\ j=1,\hdots,100, \ee where $f(t)$ is the exact
solution and $f^\dl_{m_{n_\dl}}(t)$ is the approximate solution. The
computation time (CPU time) for obtaining the approximation of
$f(t)$, the number of iterations (Iter.), and the parameters
$m_{n_\dl}$ and $a_{n_\dl}$ generated by the proposed method are
given in each
experiment (see Tables 1-12). All the calculations are done in double precision generated by MATLAB.\\

\begin{itemize}
\item \textbf{\textit{ Example 1. (see \cite{FMSS})}}\\
$$f_1(t)=\left\{
                                                                   \begin{array}{ll}
                                                                     1, & \hbox{$1/2\leq t\leq 3/2$,} \\
                                                                     0, & \hbox{$otherwise$,}
                                                                   \end{array}
                                                                 \right.\quad F_1(p)=\left\{
                                                                                       \begin{array}{ll}
                                                                                         1, & \hbox{$p=0$,} \\
                                                                                         \frac{e^{-p/2}-e^{-3p/2}}{p}, & \hbox{$p>0$.}
                                                                                       \end{array}
                                                                                     \right.
$$
\begin{figure}[htbp] \centering
\begin{tabular}{c}
\epsfig{file=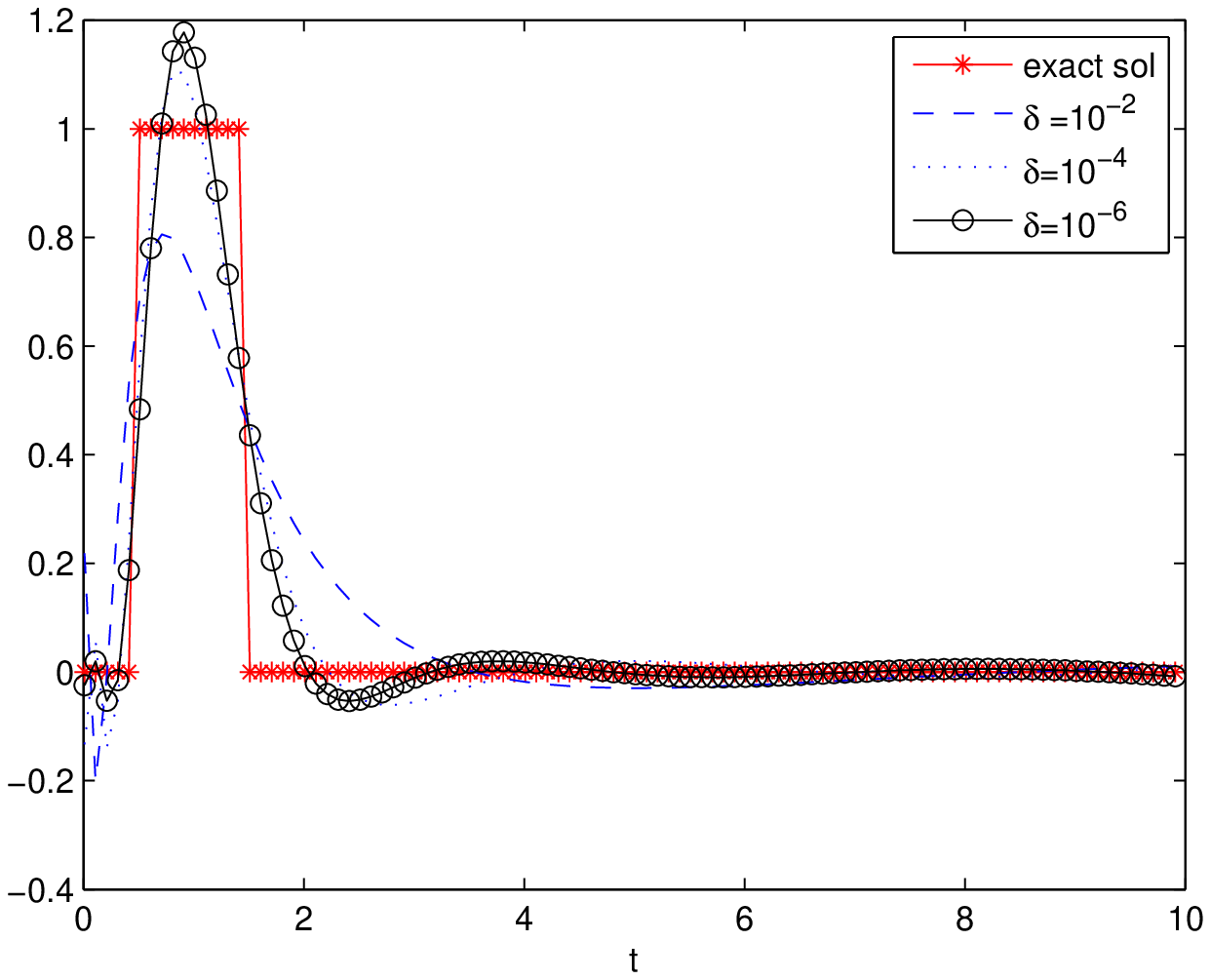,height=4cm,width=8cm,clip=} \\
\end{tabular}
\caption{Example 1: the stability of the approximate
solution}\label{fig:f1}
\end{figure}
\begin{table}[htbp]\caption{Example 1.}\label{tab1}
\newcommand{\m}{\hphantom{$-$}}
\renewcommand{\arraystretch}{1.2} 
\begin{tabular}{llllll}
\hline
$\dl$&$MAE$&$m_{n_\dl}$&$Iter.$&CPU time(second)&$a_{n_\dl}$\\

\hline
$   1.00\times 10^{-2}    $&$ 9.62\times 10^{-2}    $&$ 30  $&$ 3   $&$ 3.13\times 10^{-2}    $&$ 2.00\times 10^{-3}    $\\
$   1.00\times 10^{-4}    $&$ 5.99\times 10^{-2}    $&$ 32  $&$ 4   $&$ 6.25\times 10^{-2}    $&$ 2.00\times 10^{-7}    $\\
$   1.00\times 10^{-6}    $&$ 4.74\times 10^{-2}    $&$ 54 $&$ 5   $&$ 3.28\times 10^{-1}    $&$ 2.00\times 10^{-10}    $\\
\hline \hline
\end{tabular}
\end{table}

The reconstruction of the exact solution for different values of
the noise level $\dl$ is shown in Figure 1. When the noise level
$\dl=10^{-6},$ our result is comparable with the double
precision results shown in \cite{FMSS}. The proposed method is
stable with respect to the noise $\dl$ as shown in Table 1.

\item \textbf{\textit{ Example 2. (see \cite{LDAM02}, \cite{FMSS} )} }\\
$$f_2(t)=\left\{
                                       \begin{array}{ll}
                                         1/2, & \hbox{$t=1$,} \\
                                         1, & \hbox{$1<t<10$,}\\
                                         0, & \hbox{elsewhere,}
                                       \end{array}
                                     \right.\quad F_2(p)=\left\{
                                                           \begin{array}{ll}
                                                             9, & \hbox{$p=0$,} \\
                                                             \frac{e^{-p}-e^{-10p}}{p} , & \hbox{$p>0$.}
                                                           \end{array}
                                                         \right.
$$ \begin{figure}[htp] \centering
\begin{tabular}{c}
\epsfig{file=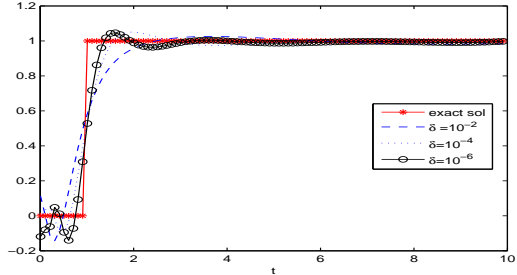,height=4cm,width=8cm,clip=} \\
\end{tabular}
\caption{Example 2: the stability of the approximate
solution}\label{fig:f2}
\end{figure}
\begin{table}[htbp]\caption{Example 2.}\label{tab2}
\newcommand{\m}{\hphantom{$-$}}
\renewcommand{\arraystretch}{1.2} 
\begin{tabular}{cccccc}
\hline
$\dl$&$MAE$&$m_{n_\dl}$&$Iter.$&CPU time (seconds)&$a_{n_\dl}$\\
\hline \hline
$   1.00\times 10^{-2}    $&$ 1.09\times 10^{-1}    $&$ 30  $&$ 2   $&$ 3.13\times 10^{-2}    $&$ 2.00\times 10^{-3}    $\\
$   1.00\times 10^{-4}    $&$ 8.47\times 10^{-2}    $&$ 32  $&$ 3   $&$ 6.25\times 10^{-2}    $&$ 2.00\times 10^{-6}    $\\
$   1.00\times 10^{-6}    $&$ 7.41\times 10^{-2}    $&$ 54 $&$ 5   $&$ 4.38\times 10^{-1}    $&$ 2.00\times 10^{-12}    $\\
\hline \hline
\end{tabular}
\end{table}

The reconstruction of the function $f_2(t)$ is plotted in Figure
2. In \cite{FMSS} a high accuracy result is given by means of
the multiple precision. But, as reported in \cite{FMSS}, to get
such high accuracy results, it takes 7 hours. From Table 2 and
Figure 2 we can see that the proposed method yields stable
solution with respect to the noise level $\dl$. The
reconstruction of the exact solution obtained by the proposed
method is better than the reconstruction shown in \cite{LDAM02}.
The result is comparable with the double precision results given
in \cite{FMSS}. For $\dl=10^{-6}$ and $\kappa=0.3$ the value of
the parameter $m_{n_\dl}$ is bounded by the constant $54$.

\item \textbf{\textit{ Example 3. (see \cite{AYAGR00}, \cite{LDAM02}, \cite{BDBM79}, \cite{MCDG07}, \cite{JWEP78})}}\\
$$f_3(t)=\left\{
           \begin{array}{ll}
             te^{-t}, & \hbox{$0\leq t<10$,} \\
             0, & \hbox{otherwise,}
           \end{array}
         \right.\quad
   F_3(p)=\frac{1-e^{-(p+1)10}}{(p+1)^2}-\frac{10e^{-(p+1)10}}{p+1}.$$
\begin{figure}[htbp] \centering
\begin{tabular}{c}
\epsfig{file=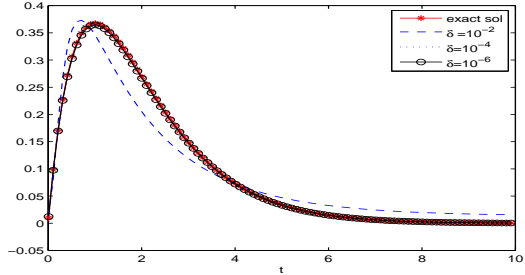,height=4cm,width=8cm,clip=} \\
\end{tabular}
\caption{Example 3: the stability of the approximate
solution}\label{fig:f3}
\end{figure}

\begin{table}[htbp]\caption{Example 3.}\label{tab3}
\newcommand{\m}{\hphantom{$-$}}
\renewcommand{\arraystretch}{1.2} 
\begin{tabular}{llllll}
\hline
$\dl$&$MAE$&$m_{n_\dl}$&$Iter.$&CPU time (seconds)&$a_{n_\dl}$\\
\hline \hline
$   1.00\times 10^{-2}    $&$ 2.42\times 10^{-2}    $&$ 30  $&$ 2   $&$ 3.13\times 10^{-2}    $&$ 2.00\times 10^{-3}    $\\
$   1.00\times 10^{-4}    $&$ 1.08\times 10^{-3}    $&$ 30  $&$ 3   $&$ 3.13\times 10^{-2}    $&$ 2.00\times 10^{-6}    $\\
$   1.00\times 10^{-6}    $&$ 4.02\times 10^{-4}    $&$ 30 $&$ 4   $&$ 4.69\times 10^{-2}    $&$ 2.00\times 10^{-9}    $\\
\hline \hline
\end{tabular}
\end{table}

We get an excellent agreement between the approximate solution
and the exact solution when the noise level $\dl=10^{-4}$ and
$10^{-6}$ as shown in Figure 3. The results obtained by the
proposed method are better than the results given in
\cite{LDAM02}. The mean absolute error $MAE$ decreases as the
noise level decreases which shows the stability of the proposed
method. Our results are more stable with respect to the noise
$\dl$ than the results presented in \cite{JWEP78}. The value of
the parameter $m_{n_\dl}$ is bounded by the constant $30$ when
the noise level $\dl=10^{-6}$ and $\kappa=0.3$.

\item \textbf{\textit{ Example 4. (see \cite{LDAM02}, \cite{FMSS})}}\\
\bee\begin{split}f_4(t)&=\left\{
           \begin{array}{ll}
             1-e^{-0.5t}, & \hbox{$0\leq t<10$,} \\
             0, & \hbox{elsewhere.}
           \end{array}
         \right. \\ F_4(p)&=\left\{
                           \begin{array}{ll}
                             8+2e^{-5}, & \hbox{$p=0$,} \\
                             \frac{1-e^{-10p}}{p}-\frac{1-e^{-(p+1/2)10}}{p+0.5}, & \hbox{$p>0$.}
                           \end{array}
                         \right.\end{split}\eee\begin{figure}[htbp]
                         \centering
\begin{tabular}{c}
\epsfig{file=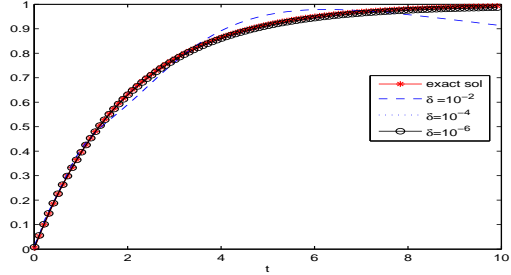,height=4cm,width=8cm,clip=} \\
\end{tabular}
\caption{Example 4: the stability of the approximate
solution}\label{fig:f4}
\end{figure}

As in our example 3 when the noise $\dl=10^{-4}$ and $10^{-6}$
are used, we get a satisfactory agreement between the
approximate solution and the exact solution. Table 4 gives the
results of the stability of the proposed method with respect to
the noise level $\dl$. Moreover, the reconstruction of the
function $f_4(t)$ obtained by the proposed method is better than
the reconstruction of $f_4(t)$ shown in \cite{LDAM02}, and is
comparable with the double precision reconstruction obtained in
\cite{FMSS}.
\begin{table}[htbp]\caption{Example 4.}\label{tab4}
\newcommand{\m}{\hphantom{$-$}}
\begin{tabular}{cccccc}
\hline
$\dl$&$MAE$&$m_{n_\dl}$&$Iter.$&CPU time (seconds)&$a_{n_\dl}$\\
\hline \hline
$   1.00\times 10^{-2}    $&$ 1.59\times 10^{-2}    $&$ 30  $&$ 2   $&$ 3.13\times 10^{-2}    $&$ 2.00\times 10^{-3}    $\\
$   1.00\times 10^{-4}    $&$ 8.26\times 10^{-4}    $&$ 30  $&$ 3   $&$ 9.400\times 10^{-2}    $&$ 2.00\times 10^{-6}    $\\
$   1.00\times 10^{-6}    $&$ 1.24\times 10^{-4}    $&$ 30 $&$ 4   $&$ 1.250\times 10^{-1}    $&$ 2.00\times 10^{-9}    $\\

\hline \hline
\end{tabular}
\end{table}

In this example when $\dl=10^{-6}$ and $\kappa=0.3$ the value of
the parameter $m_{n_\dl}$ is bounded by the constant $109$ as
shown in Table 4.

\item \textbf{\textit{ Example 5. (see \cite{KSC76}, \cite{LDAM02}, \cite{HDJA68})}}\\
\bee\begin{split} f_5(t)&=2/\sqrt{3}e^{-t/2}\sin(t\sqrt{3}/2) \\
F_5(p)&=\frac{1-\cos(10\sqrt{3}/2)e^{-10(p+0.5)}}{[(p+0.5)^2+3/4]}-\frac{2(p+0.5)e^{-10(p+0.5)}\sin(10\sqrt{3}/2)}{\sqrt{3}[(p+0.5)^2+3/4]}.
\end{split}\eee\begin{figure}[htpb] \centering
\begin{tabular}{c}
\epsfig{file=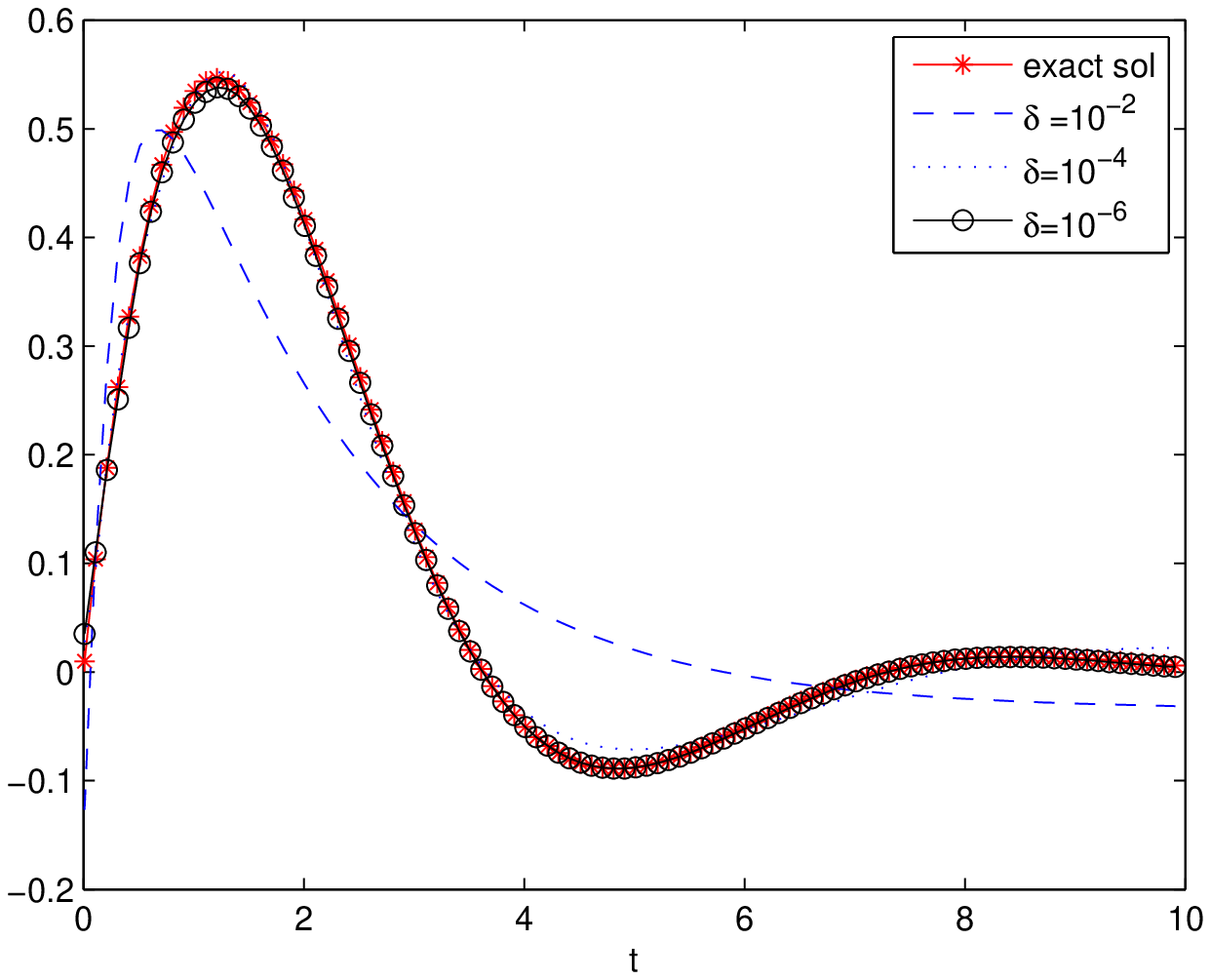,height=4cm,width=8cm,clip=} \\
\end{tabular}
\caption{Example 5: the stability of the approximate
solution}\label{fig:f5}
\end{figure}

\begin{table}[htpb]\caption{Example 5.}\label{tab5}
\newcommand{\m}{\hphantom{$-$}}
\renewcommand{\arraystretch}{1.2} 
\begin{tabular}{llllll}
\hline
$\dl$&$MAE$&$m_{n_\dl}$&$Iter.$&CPU time (seconds)&$a_{n_\dl}$\\

\hline \hline
$   1.00\times 10^{-2}    $&$ 4.26\times 10^{-2}    $&$ 30  $&$ 3   $&$ 6.300\times 10^{-2}   $&$ 2.00\times 10^{-3}    $\\
$   1.00\times 10^{-4}    $&$ 1.25\times 10^{-2}    $&$ 30  $&$ 3   $&$ 9.38\times 10^{-2}    $&$ 2.00\times 10^{-6}    $\\
$   1.00\times 10^{-6}    $&$ 1.86\times 10^{-3}    $&$ 54 $&$ 4   $&$ 3.13\times 10^{-2}    $&$ 2.00\times 10^{-9}    $\\
\hline \hline
\end{tabular}
\end{table}

This is an example of the damped sine function. In \cite{KSC76}
and \cite{HDJA68} the knowledge of the exact data $F(p)$ in the
complex plane is required to get the approximate solution. Here
we only use the knowledge of the discrete perturbed data
$F_\dl(p_j)$, $j=0,1,2,\hdots,m,$  and get a satisfactory result
which is comparable with the results given in \cite{KSC76} and
\cite{HDJA68} when the level noise $\dl=10^{-6}$. The
reconstruction of the exact solution $f_5(t)$ obtained by our
method is better than this of the method given in \cite{LDAM02}.
Moreover, our method yields stable solution with respect to the
noise level $\dl$ as shown in Figure 5 and Table 5 show. In this
example when $\kappa=0.3$ the value of the parameter $m_{n_\dl}$
is bounded by $54$ for the noise level $\dl=10^{-6}$ (see Table
5).

\item \textbf{\textit{ Example 6. (see \cite{FMSS})}}\\
\bee\begin{split}f_6(t)&=\left\{
           \begin{array}{ll}
             t, & \hbox{$0\leq t<1$,} \\
             3/2-t/2, & \hbox{$1\leq t<3$,} \\
             0, & \hbox{elsewhere.}
           \end{array}
         \right.\\
 F_6(p)&=\left\{
                               \begin{array}{ll}
                                 3/2, & \hbox{$p=0$,} \\
                                 \frac{1-e^{-p}(1+p)}{p^2}+\frac{e^{-3p}+e^{2p}(2p-1)}{2p^2}, & \hbox{$p>0$.}
                               \end{array}
                             \right.
  \end{split}\eee\begin{figure}[htpb] \centering
\begin{tabular}{c}
\epsfig{file=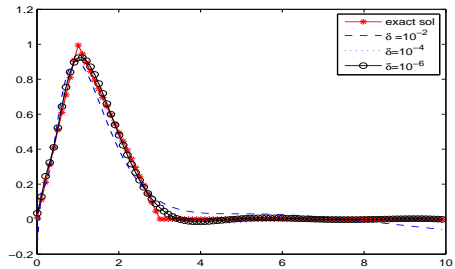,height=4cm,width=7cm,clip=} \\
\end{tabular}
\caption{Example 6: the stability of the approximate
solution}\label{fig:f6}
\end{figure}
\begin{table}[htpb]\caption{Example 6.}\label{tab6}
\newcommand{\m}{\hphantom{$-$}}
\renewcommand{\arraystretch}{1.2} 
\begin{tabular}{llllll}
\hline
$\dl$&$MAE$&$m_{n_\dl}$&$Iter.$&CPU time (seconds)&$a_{n_\dl}$\\
 \hline \hline
$   1.00\times 10^{-2}    $&$ 4.19\times 10^{-2}    $&$ 30  $&$ 2   $&$ 4.700\times 10^{-2}   $&$ 2.00\times 10^{-3}    $\\
$   1.00\times 10^{-4}    $&$ 1.64\times 10^{-2}    $&$ 32  $&$ 3   $&$ 9.38\times 10^{-2}    $&$ 2.00\times 10^{-6}    $\\
$   1.00\times 10^{-6}    $&$ 1.22\times 10^{-2}    $&$ 54 $&$ 4   $&$ 3.13\times 10^{-2}    $&$ 2.00\times 10^{-9}    $\\
\hline \hline
\end{tabular}
\end{table}

Example 6 represents a class of piecewise continuous functions.
>From Figure 6 the value of the exact solution at the points
where the function is not differentiable can not be well
approximated for the given levels of noise by the proposed
method. When the noise level $\dl=10^{-6}$, our result is
comparable with the results given in \cite{FMSS}. Table 6
reports the stability of the proposed method with respect to the
noise $\dl$. It is shown in Table 6 that the value of the
parameter $m$ generated by the proposed adaptive stopping rule
is bounded by the constant 54 for the noise level $\dl=10^{-6}$
and $\kappa=0.3$ which gives a relatively small computation
time.
\item \textbf{\textit{ Example 7. (see \cite{FMSS})}}\\
\bee\begin{split}f_7(t)&=\left\{
           \begin{array}{ll}
             -te^{-t}-e^{-t}+1, & \hbox{$0\leq t<1$,} \\
             1-2e^{-1}, & \hbox{$1\leq t<10$,}\\
             0, &\hbox{elsewhere,}
           \end{array}
         \right.
  \\
F_7(p)&=\left\{
          \begin{array}{ll}
            3/e-1+9(1-2/e), & \hbox{$p=0$,} \\
            e^{-1-p}\frac{e^{1+p}-e(1+p)^2+p(3+2p)}{p(p+1)^2}+(e-2)e^{-1-p-10p}\frac{e^{10p}-e^{p}}{p}, & \hbox{$p>0$.}
          \end{array}
        \right.\end{split}\eee

\begin{figure}[htpb] \centering
\begin{tabular}{c}
\epsfig{file=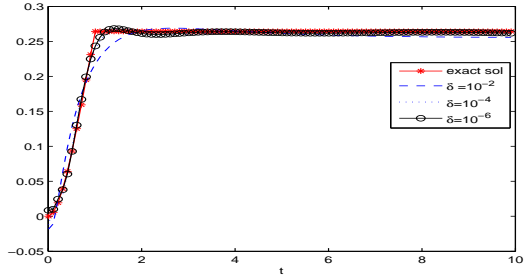,height=4cm,width=8cm,clip=} \\
\end{tabular}
\caption{Example 7: the stability of the approximate
solution}\label{fig:f7}
\end{figure}

\begin{table}[htpb]\caption{Example 7.}\label{tab7}
\newcommand{\m}{\hphantom{$-$}}
\renewcommand{\arraystretch}{1.2} 
\begin{tabular}{llllll}
\hline
$\dl$&$MAE$&$m_{n_\dl}$&$Iter.$&CPU time (seconds)&$a_{n_\dl}$\\

\hline \hline
$   1.00\times 10^{-2}    $&$ 1.52\times 10^{-2}    $&$ 30  $&$ 2   $&$ 4.600\times 10^{-2}   $&$ 2.00\times 10^{-3}    $\\
$   1.00\times 10^{-4}    $&$ 2.60\times 10^{-3}    $&$ 30  $&$ 3   $&$ 9.38\times 10^{-2}    $&$ 2.00\times 10^{-6}    $\\
$   1.00\times 10^{-6}    $&$ 2.02\times 10^{-3}    $&$ 30 $&$ 4   $&$ 3.13\times 10^{-2}    $&$ 2.00\times 10^{-9}    $\\

\hline \hline
\end{tabular}
\end{table}

When the noise level $\dl=10^{-4}$ and $\dl=10^{-6}$, we get
        numerical results which are comparable with the double
        precision results given in \cite{FMSS}. Figure 7 and
        Table 7 show the stability of the proposed method for
        decreasing $\dl$.
\item \textbf{\textit{ Example 8. (see \cite{SDMR05}, \cite{LDAM02})}}\\
\bee\begin{split}f_8(t)&=\left\{
           \begin{array}{ll}
             4t^2e^{-2t}, & \hbox{$0\leq t<10$,} \\
             0, & \hbox{elsewhere.}
           \end{array}
         \right.\quad\\
 F_8(p)&=\frac{8 + 4e^{-10(2 + p)}[-2 - 20(2 + p)-100(2-p)^2
 ]}{(2 + p)^3} .\end{split}\eee
\begin{figure}[htbp] \centering
\begin{tabular}{c}
\epsfig{file=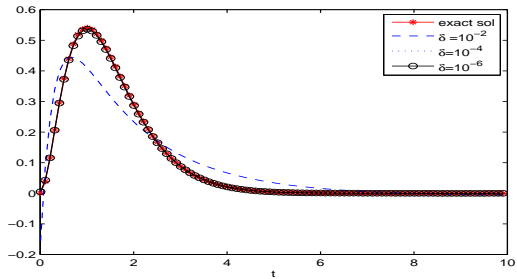,height=4cm,width=8cm,clip=} \\
\end{tabular}
\caption{Example 8: the stability of the approximate
solution}\label{fig:f8}
\end{figure}

The results of this example are similar to the results of
         Example 3. The exact solution can be well reconstructed
         by the approximate solution obtained by our method at
        the levels noise $\dl=10^{-4}$ and $\dl=10^{-6}$
         (see Figure 8). Table 8 shows that the MAE decreases as
         the noise level decreases which shows the stability of
         the proposed method with respect to the noise. In all
         the levels of noise $\dl$ the computation time of the
         proposed method in obtaining the approximate solution
         are relatively small. We get better reconstruction
         results than the results shown in \cite{LDAM02}. Our
         results are comparable with the results given in
         \cite{SDMR05}.

\begin{table}[htbp]\caption{Example 8.}\label{tab8}
\newcommand{\m}{\hphantom{$-$}}
\renewcommand{\arraystretch}{1.2} 
\begin{tabular}{llllll}
\hline
$\dl$&$MAE$&$m_{n_\dl}$&$Iter.$&CPU time (seconds)&$a_{n_\dl}$\\

\hline \hline
$   1.00\times 10^{-2}    $&$ 2.74\times 10^{-2}    $&$ 30  $&$ 2   $&$ 1.100\times 10^{-2}    $&$ 2.00\times 10^{-3}    $\\
$   1.00\times 10^{-4}    $&$ 3.58\times 10^{-3}    $&$ 30  $&$ 3   $&$ 3.13\times 10^{-2}    $&$ 2.00\times 10^{-6}    $\\
$   1.00\times 10^{-6}    $&$ 5.04\times 10^{-4}    $&$ 30 $&$ 4   $&$ 4.69\times 10^{-2}    $&$ 2.00\times 10^{-9}    $\\
\hline \hline
\end{tabular}
\end{table}

\item \textbf{\textit{ Example 9. (see \cite{MCDG07})}}\\
\bee\begin{split}f_9(t)&=\left\{
           \begin{array}{ll}
             5-t, & \hbox{$0\leq t<5$,} \\
             0, & \hbox{elsewhere,}
           \end{array}
         \right.\\ F_9(p)&=\left\{
                               \begin{array}{ll}
                                 25/2, & \hbox{$p=0$,} \\
                                 \frac{e^{-5p}+5p-1}{p^2}, & \hbox{$p>0$.}
                               \end{array}
                             \right. \end{split}\eee

\begin{figure}[h] \centering
\begin{tabular}{c}
\epsfig{file=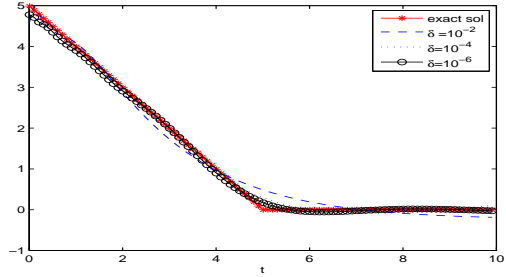,height=4cm,width=8cm,clip=} \\
\end{tabular}
\caption{Example 9: the stability of the approximate
solution}\label{fig:f9}
\end{figure}
As in Example 6 the error of the approximate solution at the
point where the function is not differentiable dominates the
error of the approximation. The reconstruction of the exact
solution can be seen in Figure 9. The detailed results are
presented in Table 9. When the double precision is used, we get
comparable results with the results shown in \cite{MCDG07}.
\begin{table}[htbp]\caption{Example 9.}\label{tab9}
\newcommand{\m}{\hphantom{$-$}}
\renewcommand{\arraystretch}{1.2} 
\begin{tabular}{llllll}
\hline
$\dl$&$MAE$&$m_{n_\dl}$&$Iter.$&CPU time (seconds)&$a_{n_\dl}$\\

\hline \hline
$   1.00\times 10^{-2}    $&$ 2.07\times 10^{-1}    $&$ 30  $&$ 3   $&$ 6.25\times 10^{-2}    $&$ 2.00\times 10^{-6}    $\\
$   1.00\times 10^{-4}    $&$ 7.14\times 10^{-2}    $&$ 32 $&$ 4   $&$ 3.44\times 10^{-1}    $&$ 2.00\times 10^{-9}    $\\
$   1.00\times 10^{-6}    $&$ 2.56\times 10^{-2}    $&$ 54 $&$ 5   $&$ 3.75\times 10^{-1}    $&$ 2.00\times 10^{-12}    $\\
\hline \hline
\end{tabular}
\end{table}

\item \textbf{\textit{ Example 10. (see \cite{BDBM79})}}\\
\bee\begin{split}f_{10}(t)&=\left\{
              \begin{array}{ll}
                t, & \hbox{$0\leq t<10$,} \\
                0, & \hbox{elsewhere,}
              \end{array}
            \right.\\
 F_{10}(p)&=\left\{
                                     \begin{array}{ll}
                                       50, & \hbox{$p=0$,} \\
                                       \frac{1-e^{-10p}}{p^2}-\frac{10e^{-10p}}{p}, & \hbox{$p>0$.}
                                     \end{array}
                                   \right. .\end{split}\eee

\begin{figure}[h!] \centering
\begin{tabular}{c}
\epsfig{file=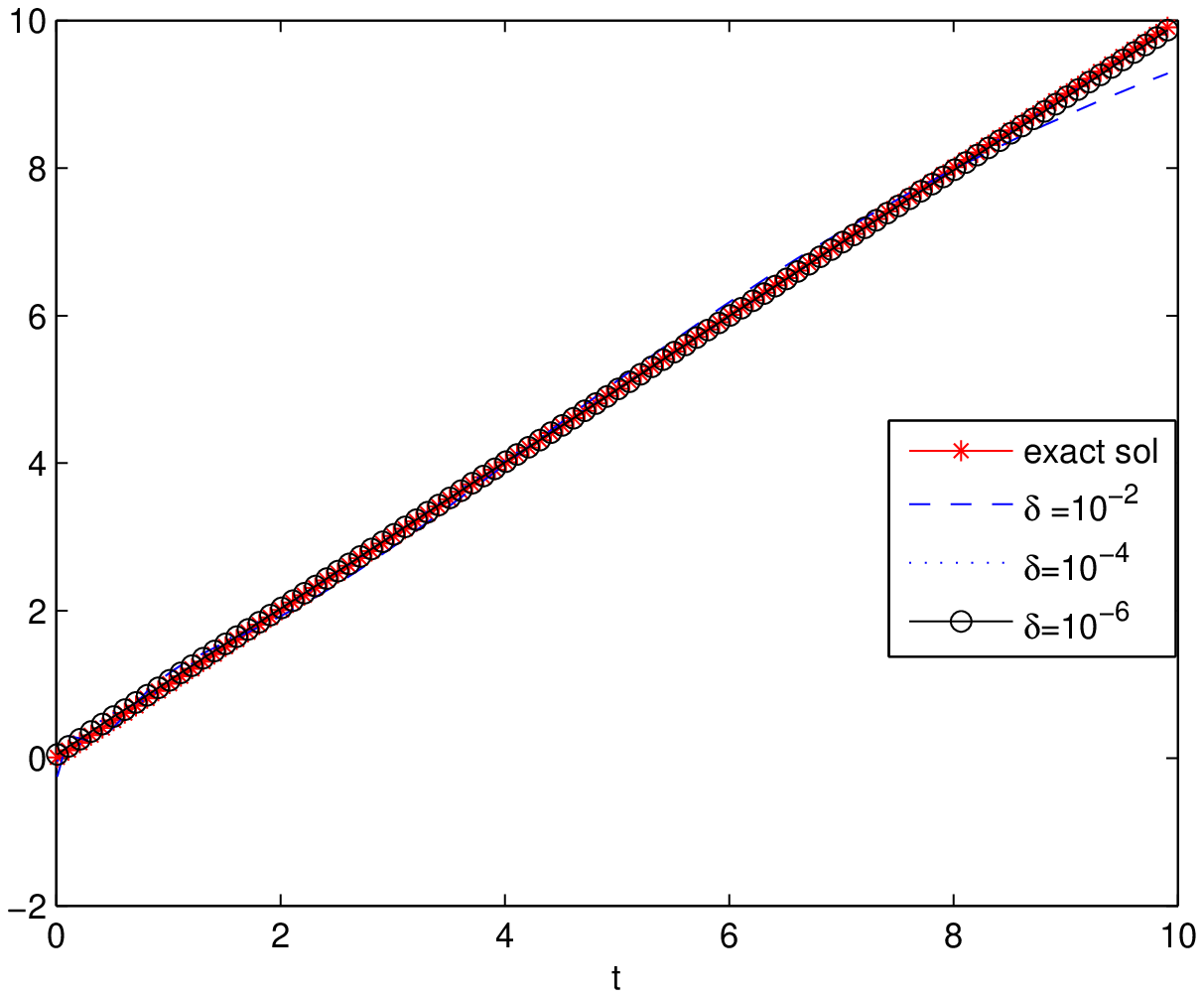,height=4cm,width=8cm,clip=} \\
\end{tabular}
\caption{Example 10: the stability of the approximate
solution}\label{fig:f10}
\end{figure}

\begin{table}[h!]\caption{Example 10.}\label{tab10}
\newcommand{\m}{\hphantom{$-$}}
\renewcommand{\arraystretch}{1.2} 
\begin{tabular}{llllll}
\hline
$\dl$&$MAE$&$m_{n_\dl}$&$Iter.$&CPU time (seconds)&$a_{n_\dl}$\\

\hline \hline
$   1.00\times 10^{-2}    $&$ 2.09\times 10^{-1}    $&$ 30  $&$ 3   $&$ 3.13\times 10^{-2}    $&$ 2.00\times 10^{-6}    $\\
$   1.00\times 10^{-4}    $&$ 1.35\times 10^{-2}    $&$ 32 $&$ 4   $&$ 9.38\times 10^{-2}    $&$ 2.00\times 10^{-9}    $\\
$   1.00\times 10^{-6}    $&$ 3.00\times 10^{-3}    $&$ 54 $&$ 4   $&$ 2.66\times 10^{-1}    $&$ 2.00\times 10^{-9}    $\\

\hline \hline
\end{tabular}
\end{table}

Table 10 shows the stability of the solution obtained by our
method with respect to the noise level $\dl$. We get an
        excellent agreement between the exact solution and the
        approximate solution for all the noise
levels $\dl$ as shown in Figure 10.

\item \textbf{\textit{ Example 11. (see \cite{BDBM79}, \cite{VVK06})}}\\
\bee\begin{split}f_{11}(t)&=\left\{
              \begin{array}{ll}
                \sin(t), & \hbox{$0\leq t<10$,} \\
                0, & \hbox{elsewhere,}
              \end{array}
            \right.\\
F_{11}(p)&=\frac{1 - e^{-10p}(p\sin(10) + \cos(10))}{1 + p^2}
.\end{split}\eee
\begin{figure}[htbp] \centering
\begin{tabular}{c}
\epsfig{file=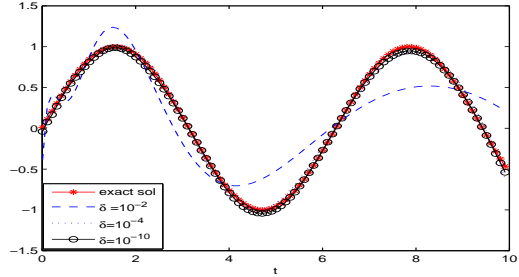,height=4cm,width=8cm,clip=} \\
\end{tabular}
\caption{Example 11: the stability of the approximate
solution}\label{fig:f11}
\end{figure}

Here the function $f_{11}(t)$ represents the class of periodic
functions. It is mentioned in \cite{VVK06} that oscillating
function can be found with acceptable accuracy only for
relatively small values of $t$. In this example the best
approximation is obtained when the noise level $\dl=10^{-6}$
which is comparable with the results given in \cite{BDBM79} and
\cite{VVK06}.  The reconstruction of the function $f_{11}(t)$
for various levels of the noise $\dl$ are given in Figure 11.
The stability of the proposed method with respect to the noise
$\dl$ is shown in Table 11. In this example the parameter
$m_{n_\dl}$ is bounded by the constant $54$ when the noise level
$\dl=10^{-6}$ and $\kappa=0.3$.

\begin{table}[h!]\caption{Example 11.}\label{tab11}
\newcommand{\m}{\hphantom{$-$}}
\renewcommand{\arraystretch}{1.2} 
\begin{tabular}{llllll}
\hline
$\dl$&$MAE$&$m_{n_\dl}$&$Iter.$&CPU time (seconds)&$a_{n_\dl}$\\

\hline \hline
$   1.00\times 10^{-2}    $&$ 2.47\times 10^{-1}    $&$ 30  $&$ 3   $&$ 9.38\times 10^{-2}    $&$ 2.00\times 10^{-6}    $\\
$   1.00\times 10^{-4}    $&$ 4.91\times 10^{-2}    $&$ 32 $&$ 4   $&$ 2.50\times 10^{-1}    $&$ 2.00\times 10^{-9}    $\\
$   1.00\times 10^{-6}    $&$ 2.46\times 10^{-2}    $&$ 54 $&$ 5   $&$ 4.38\times 10^{-1}    $&$ 2.00\times 10^{-12}    $\\
\hline \hline
\end{tabular}
\end{table}
\item \textbf{\textit{ Example 12. (see \cite{SDMR05}, \cite{BDBM79})}}\\
\bee\begin{split} f_{12}(t)&=\left\{
              \begin{array}{ll}
                t\cos(t), & \hbox{$0\leq t<10$,} \\
                0, & \hbox{elsewhere,}
              \end{array}
            \right.\\
 F_{12}(p)&= \frac{(p^2-1)-e^{-10p}(-1 + p^2 + 10p +
            10p^3) \cos(10)}{(1 + p^2)^2}\\
 &+\frac{ e^{-10p}(2p + 10 + 10p^2)\sin(10)}{(1 + p^2)^2}.
\end{split}\eee

\begin{figure}[htbp] \centering
\begin{tabular}{c}
\epsfig{file=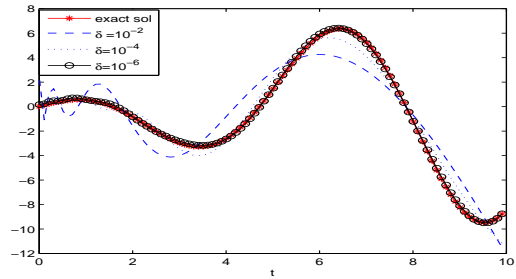,height=4cm,width=8cm,clip=} \\
\end{tabular}
\caption{Example 12: the stability of the approximate
solution}\label{fig:f12}
\end{figure}
Here we take an increasing function which oscillates as the
variable $t$ increases over the interval $[0,10)$. A poor
approximation is obtained when the noise level $\dl=10^{-2}$.
Figure 12 shows that the exact solution can be approximated very
well when the noise level $\dl=10^{-6}.$ The results of our
method are comparable with these of the methods given in
\cite{SDMR05} and \cite{BDBM79}. The stability of our method
with respect to the noise level is shown in Table 12.
\begin{table}[h!]\caption{Example 12.}\label{tab12}
\newcommand{\m}{\hphantom{$-$}}
\renewcommand{\arraystretch}{1.2} 
\begin{tabular}{llllll}
\hline
$\dl$&$MAE$&$m_{n_\dl}$&$Iter.$&CPU time (seconds)&$a_{n_\dl}$\\

\hline \hline
$   1.00\times 10^{-2}    $&$ 1.37\times 10^{0}    $&$ 96  $&$ 3   $&$ 9.38\times 10^{-2}    $&$ 2.00\times 10^{-6}    $\\
$   1.00\times 10^{-4}    $&$ 5.98\times 10^{-1}    $&$ 100$&$ 4   $&$ 2.66\times 10^{-1}    $&$ 2.00\times 10^{-9}    $\\
$   1.00\times 10^{-6}    $&$ 2.24\times 10^{-1}    $&$ 300 $&$ 5   $&$ 3.44\times 10^{-1}    $&$ 2.00\times 10^{-12}    $\\

\hline \hline
\end{tabular}
\end{table}

\item \textbf{\textit{ Example 13.}}\\
\bee f_{13}(t)=e^{-t},\quad F_{13}(p)=\frac{1}{1 + p}. \eee Here
the support of $f_{13}(t)$ is not compact. From the Laplace
transform formula one gets \bee\begin{split}
F_{13}(p)&=\int_0^\infty
e^{-t}e^{-pt}dt=\int_0^be^{-(1+p)t}dt+\int_b^\infty
e^{-(1+p)t}dt\\
&=\int_0^bf_{13}(t)e^{-pt}dt+\frac{e^{-(1+p)b}}{1+p}:=I_1+I_2,
\end{split}\eee where $\dl(b):=e^{-b}.$ Therefore, $I_2$ can be considered as noise of the data $F_{13}(p)$, i.e., \be
F_{13}^\dl(p):=F_{13}(p)-\dl(b), \ee where $\dl(b):=e^{-b}.$ In
this example the following parameters are used: $d=2$,
$\kappa=10^{-1}$ for $\dl=e^{-5}$ and $\kappa=10^{-5}$ for
$\dl=10^{-8},$ $10^{-20}$ and $10^{-30}$. Table 13 shows that
the error decreases as the parameter $b$ increases. The
approximate solution obtained by the proposed method converges
to the function $f_{13}(t)$ as $b$ increases (see Figure 13).

\begin{table}[h!]\caption{Example 13.}\label{tab13}
\newcommand{\m}{\hphantom{$-$}}
\renewcommand{\arraystretch}{1.2} 
\begin{center}\begin{tabular}{lllll}
\hline
$b$&$MAE$&$m_\dl$& Iter& CPU time (seconds)\\
\hline \hline
$ 5 $ & $   1.487\times 10^{-2}$&   2 &4&$3.125\times 10^{-2}$\\
$ 8 $ & $   2.183\times 10^{-4} $& 2 &4&$3.125\times 10^{-2}$\\
$ 20 $ & $   4.517\times 10^{-9}$ & 2&4&$3.125\times 10^{-2}$ \\
$ 30 $ & $   1.205\times 10^{-13} $&  2&4&$3.125\times 10^{-2}$\\
\hline \hline
\end{tabular}\end{center}
\end{table}

\begin{figure}[htbp] \centering
\begin{tabular}{c}
\epsfig{file=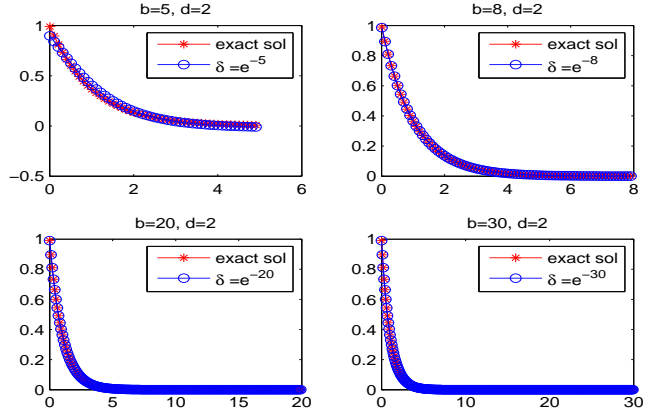,height=6cm,width=10cm,clip=} \\
\end{tabular}
\caption{Example 13: the stability of the approximate
solution}\label{fig:f13}
\end{figure}

\end{itemize}

\newpage
\section{Conclusion}
We have tested the proposed algorithm on the wide class of examples
considered in the literature. Using the rule \eqref{rulm} and the
stopping rule \eqref{dp}, the number of terms in representation
\eqref{fmdel}, the discrete data $F_\dl(p_j)$, $j=0,1,2,\hdots,m$,
and regularization parameter $a_{n_\dl}$, which are used in
computing the approximation $f_m^\dl(t)$ (see \eqref{fmdel}) of the
unknown function $f(t)$, are obtained automatically. Our numerical
experiments show that the computation time (CPU time) for
approximating the function $f(t)$ is small, namely CPU time $\leq 1$
seconds, and the proposed iterative scheme and the proposed adaptive
stopping rule yield stable solution with respect to the noise level
$\dl$. The proposed method also works for $f$ without compact
support as shown in Example 13. Moreover, in the proposed method we
only use a simple representation \eqref{fmdel} which is based on the
kernel of the Laplace transform integral, so it can be easily
implemented numerically.

\end{document}